\let\oldaddcontentsline\addcontentsline
\newcommand{\starttocentries}{\let\addcontentsline\oldaddcontentsline}
\newtheorem{theorem}{Theorem}[section]
\newtheorem{lemma}[theorem]{Lemma}
\newtheorem{prop}[theorem]{Proposition}
\newtheorem{cor}[theorem]{Corollary}
\newtheorem*{cor*}{Corollary}
\newtheorem*{conjecture*}{Conjecture}
\newtheorem*{thm*}{Theorem}
\newtheorem*{lem*}{Lemma}
\newtheorem*{prop*}{Proposition}
\theoremstyle{definition}
\newtheorem{definition}[theorem]{Definition}
\newtheorem{example}[theorem]{Example}
\newtheorem*{defn*}{Definition}
\theoremstyle{remark}
\newtheorem{remark}[theorem]{Remark}
\newcommand{\M}{\mathcal{M}}
\newcommand{\supp}{\operatorname{supp}}
\newcommand{\Hil}{\mathcal H}
\DeclareMathOperator{\prob}{Prob}
\DeclareMathOperator{\spa}{span}
\newcommand{\ISR}{\text{ISR}}
\newcommand{\IRA}{\text{IRA}}
\newcommand{\IRS}{\text{IRS}}
\newcommand{\CC}{\mathcal{CC}}
\newcommand{\St}{\text{S}}
\newcommand{\vNx}{x}
\newcommand{\vN}{\mathcal{Z}}
\newcommand{\vNn}{\mathcal{N}}
\newcommand{\vNm}{\mathcal{M}}
\newcommand{\vNk}{\mathcal{K}}
\newcommand{\wo}{\text{wo}\text{-}}
\newcommand{\so}{\text{so}\text{-}}
\newcommand{\sostar}{\text{so}^*\text{-}}
\newcommand{\lamps}{\text{Lamps}}
\newcommand{\LL}{\text{LL}}
\newcommand{\suba}{\text{SA}}
\newcommand{\amsuba}{\suba_{\text{am}}(L(\Gamma))}
\newcommand{\sub}{\text{Sub}}
\newcommand{\hyp}{\text{Hyp}}
\newcommand{\Bl}{\mathbb{B}(\ell^2(\Gamma))}
\newcommand{\E}{\mathbb{E}}
\def\l@subsection{\@tocline{2}{0pt}{1pc}{5pc}{}} \def\l@subsection{\@tocline{2}{0pt}{2pc}{6pc}{}} \makeatother
\title[Amenable Subalgebras]{On the amenable subalgebras of  group von Neumann algebras}
\author[Amrutam]{Tattwamasi Amrutam}
\address{Ben Gurion University of the Negev.
	Department of Mathematics.
	Be'er Sheva, 8410501, Israel.
}
\author[Hartman]{Yair Hartman}
\address{Ben Gurion University of the Negev.
	Department of Mathematics.
	Be'er Sheva, 8410501, Israel.
}
\author[Oppelmayer]{Hanna Oppelmayer}
\address{Universität Innsbruck. Department of Mathematics. Technikerstrasse 13, 6020 Innsbruck,
Austria}
\begin{document}
\maketitle
\begin{abstract}
We approach the study of sub-von Neumann algebras of the group von Neumann algebra $L(\Gamma)$ for countable groups $\Gamma$ from a dynamical perspective. It is shown that $L(\Gamma)$ admits a maximal invariant amenable subalgebra. The notion of invariant probability measures (IRAs) on the space of subalgebras is introduced, analogous to the concept of Invariant Random Subgroups. And it is shown that amenable IRAs are supported on the maximal amenable invariant subalgebra.  
\end{abstract}
\tableofcontents
\newpage
\section{Introduction and the statement of main results}
\newtheorem{thmx}{\textbf{Theorem}}
\renewcommand{\thethmx}{\Alph{thmx}}
The study of the space $\sub(\Gamma)$ of all closed subgroups of a given group $\Gamma$ - \textit{the Chabauty space} - has been extensively studied
in geometric group theory and ergodic theory (see, for example,~\cite{gelander2018view} and the references therein). 

Interestingly, although this space was already defined in 1950, it experienced a dramatic revival over the last two decades. During the same era, Effros~\cite{Effros} defined a topology on the collection of von Neumann subalgebras $\text{SA}(\mathcal{M})$ of a given von Neumann algebra $\mathcal{M}$, that we refer to as the \textit{Effros-Maréchal topology}. 

In this paper, we focus on the study of $\suba(L(\Gamma))$ for a countable discrete group $\Gamma$ from a dynamical perspective: we consider $\suba(L(\Gamma))$ as a $\Gamma$-space. Indeed, the natural conjugation action $\Gamma\curvearrowright\suba(L(\Gamma))$ is continuous with respect to the Effros-Maréchal topology. The space $\suba(L(\Gamma))$ extends $\sub(\Gamma)$ (see \thref{ex:IRAfromIRS}) and hence, is a natural object to study from the group theoretic point of view. We find that as a $\Gamma$-space, it is somewhat more structured than the collection of subalgebras of the general $\mathbb{B}(\Hil)$ (see e.g.,~\cite{haagerup1998effros}).  

The collection of amenable subgroups has received special attention in the sense that they reveal information about the group $\Gamma$, e.g., whether $\Gamma$ has unique trace property or is $C^*$-simple, etc (see~\cite{KK, breuillard2017c, Kennedy2020}). Most of our focus in this paper is devoted to the study of the collection of all amenable subalgebras $\amsuba$ of $L(\Gamma)$.

We show that $\amsuba$ is closed (see \thref{closedamenablecollection}). This result is in contrast with $\suba_\text{am}(\mathbb{B}(\Hil))$ as shown in \cite[Theorem~5.4]{haagerup1998effros}. \thref{closedamenablecollection} is similar in spirit to the closedness of $\text{Sub}_\text{am}(\Gamma)$ for discrete group $\Gamma$ (while for general locally compact groups it is still an open problem  (cf.~\cite{caprace2014relative})).

Our first main result asserts the existence of a maximal amenable invariant subalgebra of $L(\Gamma)$:
\begin{thmx}
\label{thm:maximalamenablesubalgebra}
Let $\Gamma$ be a discrete countable group and let $\text{Rad}(\Gamma)$ denote the maximal normal amenable subgroup of $\Gamma$. Then, $L(\text{Rad}(\Gamma))$ is the maximal amenable $\Gamma$-invariant subalgebra of $L(\Gamma)$.      
\end{thmx}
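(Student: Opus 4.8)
The plan is to prove two inclusions. First, that $L(\Rad(\Gamma))$ is itself an amenable $\Gamma$-invariant subalgebra, and second, that it contains every amenable $\Gamma$-invariant subalgebra. The first part is the easy direction: $\Rad(\Gamma)$ is an amenable group, so $L(\Rad(\Gamma))$ is a (hyper)finite, hence amenable, von Neumann algebra, and since $\Rad(\Gamma)$ is normal in $\Gamma$, conjugation by any $g \in \Gamma$ preserves $\Rad(\Gamma)$ setwise and therefore preserves $L(\Rad(\Gamma))$. So the content is entirely in maximality.

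For maximality, let $\cM \subseteq L(\Gamma)$ be an amenable $\Gamma$-invariant subalgebra; the goal is $\cM \subseteq L(\Rad(\Gamma))$. I would first pass from the von Neumann algebra $\cM$ to a subgroup of $\Gamma$. The natural candidate is the \emph{normalizer-type} object: using the conditional expectation $E_H : L(\Gamma) \to L(H)$ onto the subalgebra generated by the group elements appearing in the supports of elements of $\cM$, or more cleanly, working with the canonical trace and the Fourier expansion $x = \sum_{g} \widehat{x}(g) u_g$, one sees that the set $S(\cM) = \{ g \in \Gamma : \widehat{x}(g) \neq 0 \text{ for some } x \in \cM \}$ is conjugation-invariant because $\cM$ is. The subgroup $H = \langle S(\cM) \rangle$ is then a normal subgroup of $\Gamma$, and $\cM \subseteq L(H)$. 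It remains to show $H$ is amenable, for then $H \subseteq \Rad(\Gamma)$ by maximality of the amenable radical among normal amenable subgroups, giving $\cM \subseteq L(H) \subseteq L(\Rad(\Gamma))$.

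The crux, therefore, is: \emph{an amenable von Neumann subalgebra $\cM$ of $L(\Gamma)$ whose ``support subgroup'' $H$ is normal forces $H$ to be an amenable group.} Here is where I expect the main obstacle, and where the dynamical machinery of the paper should enter. One route: amenability of $\cM$ as a von Neumann algebra gives a $\cM$-central state (hypertrace) on $\mathbb{B}(\ell^2\Gamma)$, or equivalently a net of approximately $\cM$-central unit vectors; restricting the relevant positive functionals to $\ell^\infty(\Gamma) \subseteq \mathbb{B}(\ell^2\Gamma)$ and using that $\cM$ contains enough of $L(H)$ should produce a left-invariant mean on $H$ — roughly, $H$-invariance of the mean comes from $u_h$-centrality for $h \in H$ together with the fact that conjugation by $u_h$ on $\ell^\infty(\Gamma)$ implements the translation action of $H$ on a coset space on which invariance can be transferred. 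Making this rigorous is the heart of the matter: one must control the interaction between the vector-state side (amenability of $\cM$) and the group side (invariant mean on $H$), and normality of $H$ is what lets the argument close up, since it ensures $\Gamma$ acts on $H$ by automorphisms so the constructed mean can be averaged or chosen compatibly. An alternative, possibly cleaner, route is to invoke results already developed in the paper: if \thref{closedamenablecollection} (closedness of $\amsuba$) and the preceding structure theory give that amenable invariant subalgebras are controlled by the group von Neumann algebras of normal amenable subgroups, then one reduces directly; I would check whether the Effros–Mar\'echal continuity of the conjugation action plus closedness lets one take a ``maximal'' element and identify it, bypassing the hands-on mean construction.

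In summary: (1) verify $L(\Rad(\Gamma))$ is amenable and $\Gamma$-invariant (routine); (2) for arbitrary amenable $\Gamma$-invariant $\cM$, extract the normal support subgroup $H$ with $\cM \subseteq L(H)$; (3) prove $H$ is an amenable group — the key step — by transferring amenability of the von Neumann algebra $\cM$ (a hypertrace / approximately central vectors) to a left-invariant mean on $H$, crucially using normality of $H$; (4) conclude $H \subseteq \Rad(\Gamma)$, hence $\cM \subseteq L(\Rad(\Gamma))$. The main obstacle is step (3): bridging operator-algebraic amenability and group-theoretic amenability for the support subgroup, and it is here that I would lean on the dynamical perspective (boundary actions, invariant means on coset spaces) emphasized in the introduction.
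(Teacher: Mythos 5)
There is a genuine gap, and it sits exactly where you place it: step (3). Steps (1) and (2) are fine — the Fourier-support set $S(\M)$ is conjugation-invariant, $H=\langle S(\M)\rangle$ is normal, and $\M\subseteq L(H)$ — but this reduction merely restates the theorem: showing that the normal subgroup $H$ is amenable is equivalent to showing $H\subseteq\Rad(\Gamma)$, i.e.\ to the conclusion you want. The mechanism you sketch for (3) does not work as described: the hypertrace $\varphi$ coming from amenability of $\M$ is central only for elements of $\M$, and $\M$ need in general contain \emph{no} group unitaries $\lambda(h)$ with $h\in H$ (the paper's lamplighter example $\M_S$ is of this kind, and the radial masa in $L(\mathbb{F}_2)$ is an abelian — hence amenable — subalgebra whose support subgroup is all of $\mathbb{F}_2$, which is not amenable; only $\Gamma$-invariance rules out such behaviour). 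So "$u_h$-centrality for $h\in H$" is simply not available, and there is no evident way to turn $\M$-centrality of $\varphi$ into an invariant mean for $H$ on $\ell^\infty(\Gamma)$. Your fallback route (closedness of the amenable subalgebras plus a Zorn-type maximality argument) also does not close up, because the von Neumann algebra generated by two amenable subalgebras need not be amenable — this is precisely why, as the paper remarks, the classical proof of the existence of $\Rad(\Gamma)$ does not transfer to subalgebras.

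The paper's actual argument bypasses $H$ entirely and proves directly that $\tau_0(a\lambda(s)^*)=0$ for every $a\in\M$ and every $s\notin\Rad(\Gamma)$. The two ingredients you are missing are: (i) the set of hypertraces of $\M$ restricting to $\tau_0$ on $L(\Gamma)$, when further restricted to a copy of $C(\partial_F\Gamma)\subseteq\mathbb{B}(\ell^2(\Gamma))$, is a nonempty $\Gamma$-invariant weak$^*$-closed convex subset of $\text{Prob}(\partial_F\Gamma)$ — here $\Gamma$-invariance of $\M$ is used — and hence equals all of $\text{Prob}(\partial_F\Gamma)$ by irreducibility of the boundary action; in particular every Dirac measure $\delta_x$ extends to an $\M$-hypertrace; and (ii) a singularity lemma: if $\varphi$ is an $\M$-hypertrace with $\varphi|_{C(X)}=\delta_x$ and $sx\neq x$, then choosing $f\in C(X)$ with $f(x)=1$, $f(sx)=0$ and running Cauchy--Schwarz on $\varphi\bigl(a\sqrt{s^{-1}.f}\sqrt{s^{-1}.f}\,\lambda(s)^*\bigr)$ forces $\tau_0(a\lambda(s)^*)=0$. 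Combined with Furman's characterization of $\Rad(\Gamma)$ as the kernel of $\Gamma\curvearrowright\partial_F\Gamma$, this kills every Fourier coefficient outside $\Rad(\Gamma)$. Without an argument of this kind (or some other concrete bridge from operator-algebraic amenability of $\M$ to the group side), your step (3) remains an unproven assertion and the proof is incomplete.
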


We remark that the classical proof of the existence of $\text{Rad}(\Gamma)$ does not clearly generalize to subalgerbas.  In our proof, we use two main tools. The first is Furman's~\cite{furman2003minimal} beautiful characterization of $\text{Rad}(\Gamma)$ as the kernel of the action on the Furstenberg Boundary. The other tool used is singularity (cf.~\thref{singularity}), which has been exploited in the past for various rigidity results (see e.g., \cite{KK, hartman2023stationary, BBHP, amrutam2022subalgebras} etc.). Moreover, our result answers \cite[Conjecture~5.9]{chifan2022invariant} in positive.
\subsection{Invariant Random Subalgebras} A significant part of the study of the Chabauty space is devoted to invariant probability measures, which are invariant under the conjugation action. These are called Invariant Random Subgroups (IRSs) introduced by~\cite{abert2014kesten}. We refer the readers to the references in \cite{stuck1994stabilizers, abert2017growth, bowen2014random, bowen2015invariant, abert2014kesten} for more details. We specifically mention the result of \cite{BDL} proving that IRSs that are supported on amenable subgroups are almost surely contained in the amenable radical.

This paper introduces the notion of Invariant Random Subalgebras (IRAs) in parallel to that of IRSs. 

\begin{definition}
\thlabel{iras}
\textcolor{teal}Let $\mathcal{M}$ be a $\Gamma$-von Neumann algebra. 
A Borel probability measure on $\suba(\mathcal{M})$ that is invariant under the $\Gamma$-action is called \textit{IRA}, short for \textit{Invariant Random Subalgebra}. 
\end{definition}
Most of the time, we shall restrict ourselves to $\suba(L(\Gamma))$ under the $\Gamma$-conjugation action. Our second main theorem generalizes the result of \cite{BDL} in the context of IRAs.
\begin{thmx} 
\label{thm:maintheorem}
Let  $\mu$ be an $IRA$ on $\suba(L(\Gamma))$ for a countable group $\Gamma$. If $\mu$-almost every $\M\in\suba(L(\Gamma))$ is amenable, then $\mu$-a.e $\M\subseteq L(\text{Rad}(\Gamma))$.
\end{thmx}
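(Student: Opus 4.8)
The plan is to reduce Theorem B to the maximality statement of Theorem A by a barycenter/averaging argument. The key point is that if $\mu$ is an amenable IRA, then its ``support'' or a suitable invariant object constructed from it must itself be an amenable $\Gamma$-invariant subalgebra, which by Theorem A is contained in $L(\Rad(\Gamma))$. To make this precise, I would first pass from the measure $\mu$ on $\suba(L(\Gamma))$ to a single subalgebra. One natural candidate is to take $\cN := \bigvee_{\M \in \supp(\mu)} \M$, the von Neumann algebra generated by all subalgebras in the (closed) support of $\mu$; since $\mu$ is $\Gamma$-invariant, $\supp(\mu)$ is a $\Gamma$-invariant closed subset of $\suba(L(\Gamma))$, hence $\cN$ is a $\Gamma$-invariant subalgebra. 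The difficulty is that a join of amenable algebras need not be amenable, so this crude approach fails in general; one must instead integrate more carefully.

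The cleaner route uses the conditional expectation onto $L(\Rad(\Gamma))$ together with singularity, mirroring the tools cited for Theorem A. Write $R := \Rad(\Gamma)$ and let $E_R \colon L(\Gamma) \to L(R)$ be the canonical trace-preserving conditional expectation; note $E_R$ is $\Gamma$-equivariant because $R$ is normal. The goal is to show that for $\mu$-a.e.\ $\M$, every $x \in \M$ satisfies $x = E_R(x)$, i.e.\ $\M \subseteq L(R)$. Suppose not; then on a positive-measure set of $\M$'s there is an element $x \in \M$ with $\|x - E_R(x)\|_2 > 0$. The strategy is to use the action on the Furstenberg boundary $B = B(\Gamma)$ and singularity (\thref{singularity}): amenability of $\M$ should force the associated boundary representation / the position of $\M$ inside $L^\infty(B,\nu)\rtimes\Gamma$ to be ``singular'' in a way incompatible with $\M$ having a nonzero component orthogonal to $L(R) = \ker(\Gamma \act B)$'s von Neumann algebra. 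Concretely, I would try to run the following: an amenable subalgebra $\M \subseteq L(\Gamma)$ admits a $\M$-central state, and via the $\Gamma$-equivariant inclusion into the crossed product one builds a $\Gamma$-map from $B$ (or from a boundary realization) that, by Furman's characterization, must land in the part fixed by $\Rad(\Gamma)$; the IRA-invariance of $\mu$ is what upgrades an ``almost everywhere'' statement to a genuine invariant one and lets one invoke a maximality/ergodic-decomposition argument.

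More concretely, here is the averaging step I expect to be decisive. Consider the function $\M \mapsto \M$ valued in $\suba(L(\Gamma))$ and push forward; by the disintegration/ergodic decomposition of $\mu$ into ergodic IRAs it suffices to treat $\mu$ ergodic. For ergodic $\mu$, the quantity $\int \dim$-type or $\int \|E_{\M^\perp}(\cdot)\|$-type invariants are $\Gamma$-invariant constants. I would consider the subalgebra $\cP$ generated by $\{ u \M u^* : \mu\text{-typical } \M,\ u \in \cU(L(\Gamma))\}$ — no, rather, the correct object: let $p$ be the supremum of the $\Gamma$-invariant projections $q \in L(\Gamma)$ such that $\mu$-a.e.\ $\M q$ is amenable and ``well-positioned'' — and show the algebra cut down is amenable and invariant, hence inside $L(R)$. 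Honestly the main obstacle, and the crux of the argument, is precisely this passage from a measure on amenable subalgebras to a single amenable invariant subalgebra: one cannot simply join them, and one cannot directly apply \cite{BDL} since the combinatorial structure of subgroups (finite index, normalizers, etc.) is replaced by the subtler operator-algebraic notion of amenable inclusion. I expect the resolution to combine \thref{closedamenablecollection} (closedness of $\amsuba$, so that limits stay amenable), singularity of the boundary action (\thref{singularity}), and Furman's description of $\Rad(\Gamma)$ as $\ker(\Gamma \act B(\Gamma))$, with the IRA-invariance supplying the averaging needed to produce a $\Gamma$-invariant amenable subalgebra to which Theorem A applies. Once such a subalgebra $\cN \supseteq \M$ (for $\mu$-a.e.\ $\M$) is produced and shown amenable and $\Gamma$-invariant, Theorem A gives $\cN \subseteq L(\Rad(\Gamma))$ and hence $\M \subseteq L(\Rad(\Gamma))$ $\mu$-a.e., completing the proof.
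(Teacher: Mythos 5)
There is a genuine gap, and you have in fact named it yourself: the entire proposal hinges on ``the passage from a measure on amenable subalgebras to a single amenable invariant subalgebra,'' and that step is never carried out. Worse, the reduction you aim for is essentially unworkable as stated. The natural invariant envelope of $\mu$ (the normal closure $\overline{\langle\mu\rangle}$ of \thref{normalclosure}, i.e.\ the smallest invariant subalgebra carrying $\mu$) is generated by the $\mu$-typical algebras, and, as you correctly observe, a join of amenable subalgebras need not be amenable (already $L(\mathbb{F}_2)$ is generated by two abelian subalgebras). So there is no direct way to certify that this invariant algebra is amenable \emph{before} knowing the conclusion; in the paper the amenability of $\overline{\langle\mu\rangle}$ is deduced \emph{after} the theorem, from $\overline{\langle\mu\rangle}\subseteq L(\Rad(\Gamma))$. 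Your plan of ``produce a $\Gamma$-invariant amenable $\cN\supseteq\M$ and apply Theorem A'' is therefore circular in the one place where work is required, and the various candidate constructions you sketch (supports, suprema of invariant projections, $\int\dim$-type invariants) are not developed to the point where any of them could close this circle.

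The missing idea is that the averaging must happen not at the level of subalgebras but at the level of hypertrace sets. For each $s\notin\Rad(\Gamma)$ one fixes a metrizable boundary $X_s$ on which $s$ acts nontrivially and considers the map $\M\mapsto \text{Hype}_{\tau_0}(\M)|_{C(X_s)}$, landing in the space of compact convex subsets of $\prob(X_s)$. This map is $\Gamma$-equivariant and measurable (upper semi-continuity, which uses \thref{closedamenablecollection}), so $\mu$ pushes forward to an invariant measure on compact convex subsets of $\prob(X_s)$. Since the affine action on $\prob(X_s)$ is irreducible, the lemma of Bader--Duchesne--L\'ecureux forces this push-forward to be $\delta_{\prob(X_s)}$; hence for $\mu$-a.e.\ $\M$ \emph{every} point mass $\delta_x$ is an $(\M,\tau_0)$-invariant state, and \thref{singularity} then gives $\tau_0(a\lambda(s)^*)=0$ for all $a\in\M$ pointwise on a co-null set. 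Intersecting over the countably many $s\notin\Rad(\Gamma)$ finishes. Note that Theorem A is never invoked; rather, its proof is re-run ``in measure,'' with the convexity/irreducibility argument replaced by the BDL fixed-point-type lemma for invariant measures. Your proposal correctly identifies all the ingredients (closedness of the amenable subalgebras, singularity, Furman's characterization of $\Rad(\Gamma)$) but not the object to which the invariance of $\mu$ should be applied, which is the decisive point.
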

In particular, for groups $\Gamma$ with trivial amenable radical, there are no amenable IRAs except for $\delta_{\{\mathbb{C}\}}$.

\subsection*{Organization of the paper} In addition to this section, there are three other sections. We begin by recalling some definitions and proving some preparatory results in Section~\ref{Sec:preliminaries}, which we put to use in later parts. In Section~\ref{Sec:maximalamenable}, after establishing a singularity type result in the form of \thref{singularity}, we proceed to prove Thoerem~\ref{thm:maximalamenablesubalgebra}. In Section~\ref{sec:IRAs}, we show that the push-forward of an IRS by the natural subalgebra map gives rise to an IRA (see \thref{ex:IRAfromIRS}). We also provide an example of an IRA which is not induced from an IRS (see Example~\ref{Ex: IRAs not coming from IRSs}). After introducing the notion of normal closure for an IRA, we devote the final subsections towards the proof of Theorem~\ref{thm:maintheorem}. 
\subsection*{Acknowledgements}
The authors thank Mehrdad Kalantar, Yongle Jiang, and Chris Phillips for many helpful discussions and for sharing their insights. They also thank Mehrdad Kalantar, Ionut Chifan, and Yongle Jiang for reading through a near-complete draft of the manuscript and for pointing out numerous typos and inaccuracies, which improved the exposition greatly. We are also very thankful to the referee who thoroughly reviewed our paper and pointed out numerous corrections and suggestions. 

This work has received funding from the European Research Council (ERC) under
the European Union’s Seventh Framework Programme (FP7-2007-2013)
(Grant agreement No. 101078193), 
and by the 
Israel Science Foundation (ISF
1175/18)

H.O. is partially supported by Early Stage Funding - Vice-rector for Research, Universität Innsbruck, and the AIANI fellowship.

\section{Preliminaries}
\label{Sec:preliminaries}
In this section, we briefly recall the notions of Effros-Maréchal topology, amenable von Neumann algebras and that of the Furstenberg boundary along with some other elementary observations for our later use.
\subsection{Topology on the subalgebras of a von Neumann algebra}
Let $\vNn$ be a von Neumann algebra with separable predual. We consider the set of all sub-von Neumann algebras $\suba(\vNn)$ of $\vNn$ and equip it with the Effros-Meréchal topology. 
 We denote by $so^*$ the strong-* operator topology on $\vNn$ 
and by  $wo$ the weak-operator-topology.

\begin{definition}\label{EM top}\cite[Definition 2.2]{AHW} 
Let $\vNm_n\in\suba(\vNn)$ for $n\in\mathbb{N}$. We set
$$\liminf\limits_{n\to \infty} \vNm_n:=\{x\in \vNn\ \vert  \ \exists (x_n)_{n\in\mathbb{N}} \in l^{\infty}(\mathbb{N}, \vNm_n) \, : \, \sostar\lim\limits_{n\to \infty}x_n=x\},
$$
and 
$$\limsup\limits_{n\to \infty} \vNm_n:=\langle \{x\in \vNn\ \vert  \ \exists (x_n)_{n\in\mathbb{N}} \in l^{\infty}(\mathbb{N}, \vNm_n) \, : \, \wo\lim\limits_{n\to \infty}x_n=x\}\rangle
$$ where $\langle\cdot\rangle$ means the von Neumann algebra generated by the set.
The Effros-Maréchal topology is defined such that  $$\vNm_n\to \vNm \ \ \text{ iff } \ \ \liminf\limits_{n\to\infty} \vNm_n= \limsup\limits_{n\to \infty} \vNm_n = \vNm.$$   
\end{definition}
It is well known that this topology gives a standard Borel structure on $\suba(\vNn)$ (see \cite{Effros} and \cite{Marechal}).
\\
Let us state the following fact \cite[Corollary~2.12]{haagerup1998effros}, which we will need later.
Let $\vNm_n,\vNm\in\suba(\vNn)$, $n\in\mathbb{N}$, for a finite, separable von Neumann algebra $\vNn$. If  $\vNm_n\to \vNm$ in Effros-Maréchal topology, then 
\begin{equation}\label{exp}
    \E_{\vNm_n}(\vNx) \xrightarrow[]{\text{so*}}  \E_{\vNm}(\vNx) , \ \forall \vNx \in \mathcal{N},
\end{equation}
where $\E_{\vNm}:\vNn\longrightarrow \vNm$ denotes the canonical conditional expectation.

 \subsection{Amenable von Neumann algebras}
Let $(\vNm,\tau)\subset \mathbb{B}(\mathcal{H})$ be a tracial von Neumann algebra on a separable Hilbert space $\mathcal{H}$.
\begin{definition}\cite[Section~V]{connes1976classification}
 A $(\vNm,\tau)$-\textit{hypertrace}  is a state $\phi$ on $\mathbb{B}(\mathcal{H})$ such that on $\phi\vert_{\mathcal{M}}=\tau$ and $\phi(Tm)=\phi(mT), \ \forall m\in \mathcal{M},  \ \forall T\in\mathbb{B}(\mathcal{H})$.    \end{definition}
 We denote by $\hyp_{\tau}(\mathcal{M})$ the set of all $(\vNm,\tau)$ hypertraces.
\begin{definition}\cite{connes1976classification}
A tracial von Neumann algebra  $(\mathcal{M},\tau)$ is called \textit{amenable} (also referred to as injective) if $\hyp_{\tau}(\mathcal{M})\neq \emptyset$.
\end{definition}
One can also find the above definition in \cite[Proposition~10.2.5]{ADP}.
From \cite[Theorem~5.2]{haagerup1998effros}, it follows that the collection of all amenable von Neumann subalgebras of $\vNn$, denoted by $\suba_{am}(\vNn)$,
is a $G_{\delta}$-set in the Effros-Maréchal-topology, thus in particular Borel measurable w.r.t. this topology. The following proposition holds the key for Theorem~\ref{thm:maintheorem}. 
We denote by $S(\vNn)$ the collection of all states on $\vNn$.
\begin{prop}
\thlabel{keylink}
Let $(\mathcal{M},\tau)$ be a finite von Neumann algebra with a separable predual. 
Let $\mathcal{N}\subseteq \mathcal{M}$ be an amenable sub-von Neumann algebra of $\mathcal{M}$. Then, there exists an $\mathcal{N}$-hypertrace $\varphi\in S(\mathbb{B}(L^2(\mathcal{M},\tau)))$ such that $\varphi|_{\mathcal{M}}=\tau$.
\begin{proof}
 Let $\mathcal{N}\subseteq \mathcal{M}$ be an amenable von Neumann algebra. Since $\mathcal{N}$ is amenable, $\mathcal{N}$ is approximately finite dimensional (see \cite[Theorem~6]{connes1976classification}). Hence, we can find an increasing sequence of unital finite dimensional von-Neumann subalgebras $\{Q_n\}$ of $\mathcal{N}$ such that $\mathcal{N}=\left(\cup_n Q_n\right)''$. Now, let us consider $$\mathcal{C}=\left\{\varphi\in S(\mathbb{B}(L^2(\mathcal{M},\tau)):\varphi|_{\mathcal{M}}=\tau\right\}.$$ 
 Clearly, $\mathcal{C}$ is weak$^*$-compact. 
 Let us note that the unitary group $\mathcal{U}(Q_n)\curvearrowright \mathcal{C}$ by conjugation action. Indeed, for $u\in \mathcal{U}(Q_n)$ and $\varphi\in\mathcal{C}$, $u.\varphi\in S(\mathbb{B}(L^2(\mathcal{M},\tau)))$. Moreover, for $x\in \vNm$, \[u.\varphi(x)=\varphi(uxu^*)=\tau(uxu^*)=\tau(x).\] 
Since $Q_n$ is finite-dimensional, the action $\mathcal{U}(Q_n)\curvearrowright \mathcal{C}$ is continuous and being amenable, we will get a $\mathcal{U}(Q_n)$-fixed point in $\mathcal{C}$, which we call $\varphi_n$. More explicitly, we can also construct $\varphi_n$ as follows. Fix $\varphi\in\mathcal{C}$. Let $$\varphi_n(T) = \varphi\left(
\int_{\mathcal{U}(Q_n)}
uT u^*d\mu_n(u)\right),$$ where $\mu_n$ denotes the normalized Haar measure on the compact
group $\mathcal{U}(Q_n)$.  Note that $\varphi_n|_{\mathcal{M}}=\tau$.  And, let $\psi=\lim_{n\to\omega} \varphi_n$, $\omega\in\beta\mathbb{N}\setminus\mathbb{N}$. Then, clearly, $\psi|_{\mathcal{M}}=\tau$. We now show that $\psi$ is $\mathcal{N}$-central, i.e., $\psi(xT)=\psi(Tx)$ for all $x\in\mathcal{N}$ and $T\in \mathbb{B}(L^2(\mathcal{M},\tau))$. Fix $0\ne T\in \mathbb{B}(L^2(\mathcal{M},\tau))$. Let us observe that $\varphi_n(uT)=\varphi_n(Tu)$ for all $u\in \mathcal{U}(Q_n)$. Letting $n\to\omega$, we see that $\psi(uT)=\psi(Tu)$ for all $u\in \mathcal{U}(Q_n)$ and for every $n\in\mathbb{N}$. Since the linear span of unitary elements equals the algebra for a finite-dimensional algebra, it follows that \begin{equation}
\label{eq:fdelements}
\psi(xT)=\psi(Tx), 
\forall x\in\cup_n Q_n.
\end{equation}
Now, fix an arbitrary element $y\in \mathcal{N}$ and let $\epsilon>0$ be given. We can find an element $x\in \cup_nQ_n$ such that $\|y-x\|_{\tau}<\frac{\epsilon}{2\|T\|}$. Now, using the triangle inequality along with equation~\eqref{eq:fdelements}, we see that
\begin{align*}
&\left|\psi(yT)-\psi(Ty)\right|\\&\le \left|\psi(yT)-\psi(xT)\right|+\left|\psi(xT)-\psi(Tx)\right|+\left|\psi(Tx)-\psi(Ty)\right|\\&=\left|\psi(yT)-\psi(xT)\right|+\left|\psi(Tx)-\psi(Ty)\right|   
\end{align*}
Appealing to the Cauchy-Schwartz inequality, we see that

\begin{align*}\left|\psi(yT)-\psi(xT)\right|&\le \sqrt{\psi\left((y-x)(y-x)^*\right)} \sqrt{\psi\left(T^*T\right)}\\&\le\|y-x\|_{\tau}\|\|T\|&(\text{$\psi|_{\mathcal{M}}=\tau$})\\&<\frac{\epsilon}{2}.\end{align*}
Similarly, $\left|\psi(Tx)-\psi(Ty)\right|<\frac{\epsilon}{2}$. Putting these together, we see that $\left|\psi(yT)-\psi(Ty)\right|<\epsilon$. Since $\epsilon>0$ is arbitrary, the claim follows. 
\end{proof}
\end{prop}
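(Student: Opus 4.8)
The plan is to exploit the fact that an amenable tracial von Neumann algebra is approximately finite-dimensional (Connes), combined with a Haar-averaging argument. First I would invoke \cite[Theorem~6]{connes1976classification} to write $\mathcal{N} = \left(\bigcup_n Q_n\right)''$ for an increasing sequence of unital finite-dimensional subalgebras $Q_n \subseteq \mathcal{N}$. Then I would consider the set
\[
\mathcal{C} = \left\{\varphi \in S\left(\mathbb{B}(L^2(\mathcal{M},\tau))\right) : \varphi|_{\mathcal{M}} = \tau\right\},
\]
which is nonempty (the vector state at the canonical cyclic trace vector of $L^2(\mathcal{M},\tau)$ lies in it), convex, and weak$^*$-compact.

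Next, for each $n$ I would produce a state $\varphi_n \in \mathcal{C}$ that is invariant under conjugation by the unitary group $\mathcal{U}(Q_n)$. Since $Q_n$ is finite-dimensional, $\mathcal{U}(Q_n)$ is compact and acts continuously on $\mathcal{C}$ by $u.\varphi = \varphi(u^*\,\cdot\,u)$; note this action does preserve $\mathcal{C}$ precisely because $\tau$ is a trace, so $\varphi(uxu^*) = \tau(uxu^*) = \tau(x)$ for $x \in \mathcal{M}$. One can then get a $\mathcal{U}(Q_n)$-fixed point by compactness/amenability of $\mathcal{U}(Q_n)$, or more explicitly set $\varphi_n(T) = \varphi\left(\int_{\mathcal{U}(Q_n)} uTu^*\, d\mu_n(u)\right)$ for a fixed $\varphi \in \mathcal{C}$ and the normalized Haar measure $\mu_n$; invariance of $\mu_n$ yields $\varphi_n(uT) = \varphi_n(Tu)$ for all $u \in \mathcal{U}(Q_n)$. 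Taking a weak$^*$-limit $\psi = \lim_{n\to\omega}\varphi_n$ along a free ultrafilter $\omega \in \beta\mathbb{N}\setminus\mathbb{N}$ gives $\psi \in \mathcal{C}$, and since the $Q_n$ are increasing, every fixed $u \in \mathcal{U}(Q_n)$ eventually lies in all later $Q_m$, so $\psi(uT) = \psi(Tu)$ for every such $u$. As the linear span of unitaries of a finite-dimensional algebra is the whole algebra, this upgrades to $\psi(xT) = \psi(Tx)$ for all $x \in \bigcup_n Q_n$ and all $T \in \mathbb{B}(L^2(\mathcal{M},\tau))$.

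Finally I would pass from centrality on the dense subalgebra $\bigcup_n Q_n$ to centrality on all of $\mathcal{N}$. Given $y \in \mathcal{N}$, $T \in \mathbb{B}(L^2(\mathcal{M},\tau))$ and $\varepsilon > 0$, choose $x \in \bigcup_n Q_n$ with $\|y - x\|_\tau < \varepsilon/(2\|T\|)$; splitting $|\psi(yT) - \psi(Ty)|$ by the triangle inequality and the identity $\psi(xT) = \psi(Tx)$ reduces matters to bounding $|\psi((y-x)T)|$ and $|\psi(T(y-x))|$, each of which is controlled by the Cauchy--Schwarz inequality for the state $\psi$ together with $\psi|_{\mathcal{M}} = \tau$, e.g. $|\psi((y-x)T)| \le \psi\big((y-x)(y-x)^*\big)^{1/2}\,\psi(T^*T)^{1/2} \le \|y-x\|_\tau\,\|T\|$. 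Letting $\varepsilon \to 0$ gives $\psi(yT) = \psi(Ty)$, so $\psi$ is the desired $\mathcal{N}$-hypertrace.

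The main obstacle — and the only step that is not purely formal — is this last passage: the approximation $Q_n \to \mathcal{N}$ holds only in the $\|\cdot\|_\tau$- (equivalently strong operator) topology, not in norm, so one cannot extend the centrality identity by norm-continuity of $\psi$ alone. The Cauchy--Schwarz estimate, which crucially uses that $\psi$ restricts to the \emph{trace} on $\mathcal{M}$ and hence admits $\|\cdot\|_\tau$-bounds, is exactly what bridges this gap. Everything else is the standard ``amenable $\Rightarrow$ hypertrace'' averaging scheme.
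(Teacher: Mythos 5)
Your proposal is correct and follows essentially the same route as the paper: Connes' amenable $\Rightarrow$ AFD theorem, Haar averaging over $\mathcal{U}(Q_n)$ inside the compact convex set $\mathcal{C}$, an ultrafilter limit, and the Cauchy--Schwarz/$\|\cdot\|_\tau$-density argument to pass from $\bigcup_n Q_n$ to all of $\mathcal{N}$. The only cosmetic difference is that you explicitly note $\mathcal{C}\neq\emptyset$ via the vector state at the cyclic trace vector, which the paper leaves implicit.
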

\subsection{Minimal and Boundary Actions} 
\label{subsec:boundary}
An action $\Gamma\curvearrowright X$ is called minimal if the orbit closure $\overline{\mathcal{O}(x)}=\overline{\{s.x: s\in\Gamma\}}$ equals $X$ for every $x\in X$. If the action $\Gamma\curvearrowright X$ is minimal, then $X$ is called a minimal $\Gamma$-space.

Furstenberg \cite{Furstenberg1973} introduced the notion of \say{topological boundary}. One purpose is to relate lattices of semisimple lie groups with their ambient groups (e.g., ${SL}_n(\mathbb{Z})$ in ${SL}_n(\mathbb{R})$).
We make this notion precise below.\\
Let $\Gamma$ be a discrete countable group and $X$ be a  $\Gamma$-space, i.e., $X$ is a compact Hausdorff space and $\Gamma\curvearrowright X$ by homeomorphisms. 
The action $\Gamma \curvearrowright X$ is called a boundary action if
$\left\{\delta_x: x\in X\right\}\subset\overline{\Gamma\nu}^{\text{weak}^*}$
for every $\nu \in \text{Prob}(X)$. If $\Gamma\curvearrowright X$ is a boundary action, then $X$ is called a $\Gamma$-boundary. In particular, every boundary action is always minimal.
\begin{prop}\cite{Furstenberg1973,Furstenberg}
The Furstenberg boundary of $\Gamma$, $\partial_F\Gamma$ is a $\Gamma$ boundary which is universal in the sense that every other
$\Gamma$-boundary $Y$ is a $\Gamma$-equivariant continuous image of $\partial_F\Gamma$. 
\end{prop}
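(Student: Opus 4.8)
The plan is to obtain $\partial_F\Gamma$ as a universal object in the category of boundary actions and then read off universality from the construction. First I would record that the condition defining a boundary action is equivalent to $X$ being minimal and \emph{strongly proximal} — i.e. for every $\nu\in\text{Prob}(X)$ the weak$^*$-orbit-closure $\overline{\Gamma\nu}$ contains a point mass — because once $\delta_{x_0}\in\overline{\Gamma\nu}$, the set $\overline{\Gamma\nu}$, being closed and $\Gamma$-invariant, contains $\overline{\Gamma\delta_{x_0}}=\{\delta_x:x\in\overline{\Gamma x_0}\}$, which is all of $\{\delta_x:x\in X\}$ by minimality. So it is enough to build a universal minimal strongly proximal $\Gamma$-flow.

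The structural heart is the claim that the class $\mathcal B$ of $\Gamma$-boundaries is closed under passing to factors (immediate) and under taking minimal subflows of arbitrary products. For a minimal subflow $Z\subseteq\prod_{j\in J}X_j$ with each $X_j\in\mathcal B$, only strong proximality needs checking, minimality being assumed. For a \emph{finite} product I would argue in two steps: given $\nu\in\text{Prob}(Z)$, use strong proximality of $X_1$ to push the first marginal of a translate of $\nu$ to a point mass, passing to a subnet of the translates (which stays inside the closed invariant set $\overline{\Gamma\nu}$); then iterate over the remaining factors, at each stage passing to a further subnet so that the coordinates already collapsed — which the group action only moves around — converge. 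For an arbitrary product I would write $\prod_{j}X_j=\varprojlim_{F}\prod_{j\in F}X_j$ over finite $F\subseteq J$ and run a diagonal argument: for each finite $F$ choose a group element making the $F$-marginal of $\nu$ nearly concentrated, take a weak$^*$-limit point $\lambda\in\overline{\Gamma\nu}$ along the directed set of finite subsets, and observe that each finite-coordinate marginal of $\lambda$ is a point mass, whence so is $\lambda$ itself, since $Z$ — a subflow of an inverse limit — is again an inverse limit.

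With that in hand the construction is short. Every $\Gamma$-boundary is in particular a minimal $\Gamma$-flow, hence — fixing a base point and using the universal property of $\beta\Gamma$ — a $\Gamma$-equivariant continuous quotient of $\beta\Gamma$; since $\beta\Gamma$ admits only a set of quotients, the isomorphism classes of $\Gamma$-boundaries form a set, say with representatives $\{X_i\}_{i\in I}$. I would then let $\partial_F\Gamma\subseteq\prod_{i\in I}X_i$ be a minimal subflow for the diagonal action (Zorn's lemma), which is a $\Gamma$-boundary by the previous step. For universality, given any $\Gamma$-boundary $Y\cong X_{i_0}$, the restriction to $\partial_F\Gamma$ of the coordinate projection onto $X_{i_0}$ is $\Gamma$-equivariant and continuous, with image a nonempty closed $\Gamma$-invariant subset of the minimal flow $X_{i_0}$, hence all of $X_{i_0}$; composing with $X_{i_0}\cong Y$ yields the required equivariant continuous surjection $\partial_F\Gamma\twoheadrightarrow Y$.

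The hard part will be closure of $\mathcal B$ under \emph{infinite} products (equivalently, inverse limits over finite sub-products): strong proximality passes painlessly to factors and finite products, but the infinite case genuinely needs the compactness-and-diagonalization bookkeeping sketched above — this is precisely where the classical treatment of the universal strongly proximal flow does its work. Everything else is soft; uniqueness of $\partial_F\Gamma$, though not asked for here, then follows formally, since two universal boundaries factor onto each other while an equivariant self-factor of a minimal strongly proximal flow is an isomorphism. An alternative route that sidesteps the product argument altogether is to identify $C(\partial_F\Gamma)$ with the $\Gamma$-injective envelope $I_\Gamma(\mathbb C)$ of the trivial $\Gamma$-$C^*$-algebra (Hamana's theory), deducing minimality, strong proximality, and universality from the rigidity and essentiality of that envelope.
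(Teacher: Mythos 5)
Your construction is precisely the ``standard product argument involving the representatives of all boundaries'' that the paper invokes (citing Furstenberg, p.~199) without writing out: realize every $\Gamma$-boundary as a quotient of $\beta\Gamma$ so that the isomorphism classes form a set, take a minimal subflow of the product of representatives, verify strong proximality coordinate-by-coordinate, and project onto each factor using minimality. The one place your sketch is looser than it should be is the infinite-product step --- ``choose $g_F$ making the $F$-marginal nearly concentrated'' is not quantified in a way that forces the limit's marginals to be point masses, and is most cleanly replaced by a Zorn/transfinite iteration over the set of already-collapsed coordinates --- but you correctly isolate this as the only nontrivial point, and your key observation (previously collapsed coordinates remain collapsed under further translation, so one can proceed one coordinate at a time) is exactly the mechanism that makes the classical proof work.
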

The fact that such a space exists follows from a standard product argument involving the representatives of all boundaries (see, e.g., \cite[P.~199]{Furstenberg1973}). Moreover, $\partial_F\Gamma$ is unique up to $\Gamma$-equivariant homeomorphism. The action $\Gamma\curvearrowright\partial_F\Gamma$ of the group on its Furstenberg boundary can reveal information about the group. For example, Kalantar and Kennedy \cite{KK} gave a dynamical characterization of $C^*$-simplicity in terms of the action $\Gamma\curvearrowright\partial_F\Gamma$ on the Furstenberg boundary $\partial_F\Gamma$. It is also known that the group $\Gamma$ is amenable if and only if the associated Furstenberg boundary $\partial_F\Gamma$ is trivial (see, e.g.,~\cite[Theorem~3.1, Chapter~3]{Prox}). More generally, 
Furman~\cite[Proposition~7]{furman2003minimal} showed that $\text{Rad}(\Gamma)$ is exactly the kernel of the action $\Gamma\curvearrowright\partial_F\Gamma$ (also see \cite[Proposition~2.8]{breuillard2017c}).
It is this latter criterion which we exploit for our purposes.

We also record that the affine action $\Gamma\curvearrowright\text{Prob}(\partial_F\Gamma)$ is irreducible in the sense that $\text{Prob}(\partial_F\Gamma)$ does not have any non-trivial $\Gamma$-invariant weak$^*$-closed convex subset. Let $K\subseteq\text{Prob}(\partial_F\Gamma)$ be a non-empty $\Gamma$-invariant closed convex subset. Then, for any $\nu\in K$, we see that $\left\{\delta_x: x\in \partial_F\Gamma\right\}\subset\overline{\Gamma\nu}^{\text{weak}^*}\subset K$. Since $K$ is convex, it follows that
\[\text{Prob}(\partial_F\Gamma)=\overline{\text{Conv}}\{\delta_x: x\in \partial_F\Gamma\}\subset K. \]
\section{Maximal amenable \texorpdfstring{$\Gamma$}{}-invariant subalgebra}
\label{Sec:maximalamenable}
Let $\Gamma$ be a discrete countable group.
It follows from Day's result~(\cite[Lemma~1]{day1957amenable}) that there is a unique maximal normal amenable subgroup of $\Gamma$, called the amenable radical. We denote it by $\text{Rad}(\Gamma)$. 
 By $L(\Gamma)$ we denote the group von Neumann algebra, i.e. $L(\Gamma)=\overline{\spa(\lambda(\Gamma))}^{so}$, where $\lambda$ denotes the left regular representation.
It then immediately follows that $L(\text{Rad}(\Gamma))$ is a $\Gamma$-invariant amenable von Neumann subalgebra of $L(\Gamma)$. In this section, we prove that $L(\text{Rad}(\Gamma))$ is also the largest $\Gamma$-invariant amenable subalgebra of $L(\Gamma)$. 

The classical argument (see for example \cite{nevo1994boundary}) in the setting of groups does not have an obvious modification to encompass the von Neumann setup. To prove our result, we resort to the dynamical characterization of the amenable radical in terms of the action $\Gamma\curvearrowright\partial_F\Gamma$. Let $\tau_0$ denote the canonical trace on $L(\Gamma)$.
\begin{definition}
\thlabel{invstate}
Let $\Gamma$ be a discrete countable group. Let $\mathcal{A}\subset\mathbb{B}(\ell^2(\Gamma))$ be a unital $C^*$-algebra and $\M\subseteq L(\Gamma)$ be an amenable von Neumann subalgebra. A state $\varphi\in S(\mathcal{A})$ is called \textit{$(\M,\tau_0)$-invariant} if there exists an $\M$-hypertrace $\psi\in S(\mathbb{B}(\ell^2(\Gamma)))$ such that $\psi|_{\mathcal{A}}=\varphi$ and $\psi|_{L(\Gamma)}=\tau_0$. We denote the collection of such states by $S^{\M}_{\tau_0}(\mathcal{A})$.
\end{definition}
Let $X$ be a minimal $\Gamma$-space. We can view $C(X)$ as multiplication operators on $\mathbb{B}(\ell^2(\Gamma))$. Fix $x_0\in X$. For $f\in C(X)$, the map $M(f):\ell^2(\Gamma)\to\ell^2(\Gamma)$ defined by $M(f)(\delta_t)=f(t.x_0)\delta_t$ is linear and bounded. Since the action $\Gamma\curvearrowright X$ is minimal, we see that $\|M(f)\|=\|f\|_{\infty}$ for every $f\in C(X)$. Therefore, we can identity $C(X)$ with its image $M(C(X))$ inside $\mathbb{B}(\ell^2(\Gamma))$. We note that this embedding is not canonical. However, it is enough to fix one embedding for our purposes. 
\begin{lemma}
\thlabel{singularity}
Let $\Gamma$ be a discrete group and $\M\subseteq L(\Gamma)$ be an amenable von Neumann subalgebra. Let $s\in\Gamma\setminus\{e\}$. Assume that there exists a minimal $\Gamma$-space $X$ and an embedding of $C(X)\subseteq \mathbb{B}(\ell^2(\Gamma))$ (possibly depending on s) such that the following hold: There exists $x\in X$ such that
\begin{enumerate}
    \item  $sx\ne x$, and
    \item $\delta_x\in S^{\M}_{\tau_0}(C(X))$. 
\end{enumerate}
Then, $\tau_0(a\lambda(s)^*)=0$ for all $a\in \M$.
\begin{proof}
Let $\M\subseteq L(\Gamma)$ be an amenable von Neumann algebra. Let $s\in \Gamma\setminus\{e\}$. Assume a minimal $\Gamma$-space $X$ satisfies the above conditions and let $x\in X$ be such that $sx\ne x$.
Since $\delta_x\in S^{\M}_{\tau_0}(C(X))$, there exists a $\M$-hypertrace $\varphi$ such that $\varphi|_{C(X)}=\delta_x$ and $\varphi|_{L(\Gamma)}=\tau_0$.  Choose $f\in C(X)$  such that $f(x)=1$ and $f(sx)=0$ with $0\le f \le 1$. One can view $C(X)$ as a subset of $\mathbb{B}(\ell^2(\Gamma))$.
Now, since $C(X)$ falls in the multiplicative domain of $\varphi$, we see that for any $a\in\vNm$, 
\begin{align*}
\tau_0(a\lambda(s)^*)&=\varphi(a\lambda(s)^*)
\\&=\varphi\left(a\lambda(s)^*f\right)
\\&=\varphi\left(a(s^{-1}.f)\lambda(s)^*\right)\\&=\varphi\left(a\sqrt{(s^{-1}.f)}\sqrt{(s^{-1}.f)}\lambda(s)^*\right).\end{align*} 
Now, using Cauchy-Schwartz inequality, we see that
\begin{align*}
&|\varphi(a\lambda(s)^*)|\\&=\left|\varphi\left(a\sqrt{(s^{-1}.f)}\sqrt{(s^{-1}.f)}\lambda(s)^*\right)\right|
\\&\le \sqrt{\varphi(a(s^{-1}.f)a^*)}\sqrt{\varphi(\lambda(s)(s^{-1}.f)\lambda(s)^*)}\\&=\sqrt{\varphi((s^{-1}.f)a^*a)}\sqrt{\varphi(f)}
&\text{($\varphi$ is a $\M$-hypertrace)}\\&=\sqrt{(s^{-1}.f)(x)\varphi(a^*a)}\sqrt{\varphi(f)}&\text{($\varphi|_{C(X)}=\delta_x$)}\\&=\sqrt{f(sx)\varphi(a^*a)}\sqrt{\varphi(f)}\\&=0\end{align*}
The claim follows. 
\end{proof}
\end{lemma}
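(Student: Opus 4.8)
The plan is to use the $\M$-hypertrace packaged in the hypothesis $\delta_x\in S^{\M}_{\tau_0}(C(X))$: by \thref{invstate} there is a state $\varphi$ on $\mathbb{B}(\ell^2(\Gamma))$ with $\varphi|_{C(X)}=\delta_x$, $\varphi|_{L(\Gamma)}=\tau_0$, and $\varphi(Tm)=\varphi(mT)$ for all $m\in\M$ and $T\in\mathbb{B}(\ell^2(\Gamma))$. The first step is to observe that every $f\in C(X)$ lies in the multiplicative domain of $\varphi$. This is the only genuinely soft input: since $\varphi|_{C(X)}=\delta_x$ is a character we have $\varphi(f^*f)=|f(x)|^2=|\varphi(f)|^2$ (and likewise for $ff^*$) for $f\in C(X)$, and the standard multiplicative-domain lemma for unital completely positive maps then yields $\varphi(fT)=f(x)\,\varphi(T)=\varphi(Tf)$ for \emph{every} $T\in\mathbb{B}(\ell^2(\Gamma))$, not merely for $T\in C(X)$.

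Next I would record how the fixed copy of $C(X)$ interacts with $\lambda(\Gamma)$. In the embedding $M(f)\delta_t=f(t.x_0)\delta_t$ one checks directly that $\lambda(s)M(f)\lambda(s)^*=M(s.f)$, where $(s.f)(y)=f(s^{-1}.y)$; equivalently $\lambda(s)^*f\lambda(s)=s^{-1}.f$, so $\lambda(s)^*f=(s^{-1}.f)\,\lambda(s)^*$ as operators.

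Now the computation. Using $sx\neq x$ and Urysohn's lemma on the compact Hausdorff space $X$, choose $f\in C(X)$ with $0\le f\le 1$, $f(x)=1$ and $f(sx)=0$. Fix $a\in\M$. Since $\varphi(f)=\delta_x(f)=1$ and $f$ is in the multiplicative domain,
\[
\tau_0(a\lambda(s)^*)=\varphi(a\lambda(s)^*)=\varphi(a\lambda(s)^*)\,\varphi(f)=\varphi\big(a\lambda(s)^*f\big)=\varphi\big(a\,(s^{-1}.f)\,\lambda(s)^*\big).
\]
Write $g:=s^{-1}.f\ge 0$ and $g=(\sqrt g)^2$, and apply the Cauchy--Schwarz inequality for the state $\varphi$ to the factorization $(a\sqrt g)(\sqrt g\,\lambda(s)^*)$:
\[
\big|\varphi\big(a\lambda(s)^*f\big)\big|^2=\big|\varphi\big(ag\,\lambda(s)^*\big)\big|^2\le \varphi\big(aga^*\big)\,\varphi\big(\lambda(s)\,g\,\lambda(s)^*\big).
\]
For the right factor, $\lambda(s)\,g\,\lambda(s)^*=s.g=f$, so it equals $\varphi(f)=1$. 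For the left factor, the hypertrace property moves $a$ across, $\varphi(aga^*)=\varphi(g\,a^*a)$, and then multiplicativity of $\varphi$ on $C(X)$ gives $\varphi(g\,a^*a)=g(x)\,\varphi(a^*a)=f(sx)\,\varphi(a^*a)=0$. Hence $\tau_0(a\lambda(s)^*)=0$, as required.

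I expect the main obstacle to be nothing deeper than carefully justifying the first paragraph — that $\varphi|_{C(X)}$ being a $*$-homomorphism forces $\varphi(fT)=\varphi(f)\varphi(T)$ and $\varphi(Tf)=\varphi(T)\varphi(f)$ for all $T$ — together with keeping track of which elements the hypertrace identity and the multiplicative-domain identity are being applied to. Everything after that is a one-line Cauchy--Schwarz estimate once the Urysohn function separating $x$ from $sx$ is chosen.
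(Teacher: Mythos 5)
Your proof is correct and follows essentially the same route as the paper: extract the hypertrace $\varphi$ with $\varphi|_{C(X)}=\delta_x$, pick a Urysohn function $f$ separating $x$ from $sx$, insert $f$ via the multiplicative domain, commute it past $\lambda(s)^*$, and kill the resulting term with Cauchy--Schwarz plus the hypertrace identity. The only difference is cosmetic: you spell out why $C(X)$ lies in the multiplicative domain of $\varphi$ (via $\varphi|_{C(X)}$ being a character and Choi's lemma), a step the paper's proof asserts without justification.
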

We remark that the argument made above is influenced by the paper of Haagerup~\cite{haagerup2016new} and especially bears resemblance to \cite[Lemma~3.1]{haagerup2016new}.
\begin{prop} 
\thlabel{maximalamenable}
Let $\Gamma$ be a discrete countable group. Let $\M\subseteq L(\Gamma)$ be a $\Gamma$-invariant amenable von Neumann subalgebra. 
Then, $\M\subseteq L(\text{Rad}(\Gamma))$. In particular, $L(\text{Rad}(\Gamma))$ is the largest amenable $\Gamma$-invariant subalgebra of $L(\Gamma)$.   
\end{prop}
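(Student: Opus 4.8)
The plan is to reduce \thref{maximalamenable} to the singularity lemma \thref{singularity} applied with $X = \partial_F\Gamma$, exploiting Furman's characterization of $\text{Rad}(\Gamma)$ as the kernel of $\Gamma\curvearrowright\partial_F\Gamma$. Fix a $\Gamma$-invariant amenable $\M\subseteq L(\Gamma)$ and fix any $s\in\Gamma\setminus\text{Rad}(\Gamma)$. By Furman's result, $s$ acts nontrivially on $\partial_F\Gamma$, so there is a point $x\in\partial_F\Gamma$ with $sx\ne x$. If we can verify hypothesis (2) of \thref{singularity} — that $\delta_x\in S^{\M}_{\tau_0}(C(\partial_F\Gamma))$ for a suitable embedding $C(\partial_F\Gamma)\subseteq\mathbb{B}(\ell^2(\Gamma))$ — then the lemma gives $\tau_0(a\lambda(s)^*)=0$ for every $a\in\M$. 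Since this holds for all $s\notin\text{Rad}(\Gamma)$, the Fourier expansion $a=\sum_{t}\tau_0(a\lambda(t)^*)\lambda(t)$ is supported on $\text{Rad}(\Gamma)$, whence $a\in L(\text{Rad}(\Gamma))$; as $a\in\M$ was arbitrary, $\M\subseteq L(\text{Rad}(\Gamma))$. The reverse containment is immediate since $L(\text{Rad}(\Gamma))$ is itself $\Gamma$-invariant and amenable, giving the ``in particular'' clause.

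The main obstacle is producing the invariant state, i.e.\ hypothesis (2). Here is how I would do it. By \thref{keylink} (applied with $\mathcal{M}=L(\Gamma)$, noting $\ell^2(\Gamma)\cong L^2(L(\Gamma),\tau_0)$), there is an $\M$-hypertrace $\psi_0\in S(\mathbb{B}(\ell^2(\Gamma)))$ with $\psi_0|_{L(\Gamma)}=\tau_0$. Fix the embedding $M\colon C(\partial_F\Gamma)\hookrightarrow\mathbb{B}(\ell^2(\Gamma))$ described just before \thref{singularity} (using some base point $x_0$), and consider the set
\[
K = \bigl\{\,\psi|_{C(\partial_F\Gamma)} \ :\ \psi\in S(\mathbb{B}(\ell^2(\Gamma))),\ \psi\ \text{an }\M\text{-hypertrace},\ \psi|_{L(\Gamma)}=\tau_0\,\bigr\}\subseteq\text{Prob}(\partial_F\Gamma).
\]
This $K$ is nonempty (it contains $\psi_0|_{C(\partial_F\Gamma)}$), weak$^*$-compact, and convex. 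The key point is that $K$ is $\Gamma$-invariant: given such a $\psi$ and $g\in\Gamma$, the state $T\mapsto\psi(\lambda(g)^*T\lambda(g))$ is again an $\M$-hypertrace (because $\M$ is $\Gamma$-invariant, so $\lambda(g)\M\lambda(g)^*=\M$) that restricts to $\tau_0$ on $L(\Gamma)$ (since $\tau_0$ is $\Gamma$-invariant), and on $C(\partial_F\Gamma)$ its restriction is the translate $g\cdot(\psi|_{C(\partial_F\Gamma)})$ — this last equality is precisely where the equivariance of the chosen embedding $M$ with respect to the $\Gamma$-action on $\partial_F\Gamma$ gets used, and is the calculation I would check carefully. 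Then by irreducibility of $\Gamma\curvearrowright\text{Prob}(\partial_F\Gamma)$ (recorded at the end of Section~\ref{subsec:boundary}), $K=\text{Prob}(\partial_F\Gamma)$; in particular every $\delta_x\in K$, which is exactly hypothesis (2) for our chosen $x$.

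A couple of technical points to pin down along the way: first, that the embedding $M$ of $C(\partial_F\Gamma)$ is genuinely $\Gamma$-equivariant for the conjugation action of $\lambda(\Gamma)$ on $\mathbb{B}(\ell^2(\Gamma))$ — from $M(f)\delta_t = f(t.x_0)\delta_t$ one computes $\lambda(g)M(f)\lambda(g)^*\delta_t = f(g^{-1}t.x_0)\delta_t = M(g.f)\delta_t$, so conjugation by $\lambda(g)$ implements the translation $f\mapsto g.f$, as needed; second, that in the Cauchy–Schwarz/multiplicative-domain manipulations inside \thref{singularity} the commutative algebra $C(\partial_F\Gamma)$ indeed lies in the multiplicative domain of the hypertrace $\varphi$, which holds because $\varphi$ restricted to $C(\partial_F\Gamma)$ is the character $\delta_x$ and characters always lie in the multiplicative domain. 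With these in hand the argument closes. I expect the embedding-equivariance bookkeeping to be the only genuinely delicate step; everything else is assembly of \thref{keylink}, \thref{singularity}, Furman's proposition, and irreducibility of the boundary action.
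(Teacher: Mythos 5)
Your proposal is correct and follows essentially the same route as the paper: produce a hypertrace via \thref{keylink}, observe that the restrictions of $(\M,\tau_0)$-hypertraces to $C(\partial_F\Gamma)$ form a nonempty $\Gamma$-invariant weak$^*$-closed convex set (using $\Gamma$-invariance of $\M$ and equivariance of the embedding), invoke irreducibility of $\text{Prob}(\partial_F\Gamma)$ to get all point masses, and finish with \thref{singularity} and Furman's characterization of $\text{Rad}(\Gamma)$. The technical points you flag (equivariance of the multiplication embedding and the multiplicative-domain step) are exactly the ones the paper relies on, and your verifications are correct.
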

\begin{proof}
Suppose that $\M$ is an amenable $\Gamma$-invariant subalgebra of $L(\Gamma)$. Let us consider $\text{Hype}_{\tau_0}(\M)$, the collection of all $(\M,\tau_0)$ -hypertraces whose restrictions to $L(\Gamma)$ is the canonical trace $\tau_0$. Due to Proposition~\ref{keylink}, this is a non-empty set.
Let $\partial_F\Gamma$ denote the Furstenberg boundary of the group $\Gamma$ and fix an embedding of $C(\partial_F\Gamma)$ inside $\mathbb{B}(\ell^2(\Gamma))$. The group $\Gamma$ acts on  $S(\mathbb{B}(\ell^2(\Gamma)))$ by $(\gamma. \phi)(T)=\phi(\lambda(\gamma)T\lambda(\gamma^{-1}))$ for $\phi\in S(\mathbb{B}(\ell^2(\Gamma)))$, $\gamma\in\Gamma$, $T\in \mathbb{B}(\ell^2(\Gamma))$. 
We note that $\text{Hype}_{\tau_0}(\M)|_{C(\partial_F\Gamma)}$ is a $\Gamma$-invariant weak$^*$-closed convex subset of $\text{Prob}(\partial_F\Gamma)$. It is clearly weak$^*$-closed and convex. We now show its $\Gamma$-invariance. Let $s\in\Gamma$, and $\nu\in \text{Hype}_{\tau_0}(\M)|_{C(\partial_F\Gamma)}$. Then, there exists $\varphi\in \text{Hype}_{\tau_0}(\M)$ such that $$\varphi|_{C(\partial_F\Gamma)}=\nu.$$ Since $\mathcal{M}$ is $\Gamma$-invariant, we see that $\varphi(\lambda(s)\cdot\lambda(s)^*)\in \text{Hype}_{\tau_0}(\M)$. Since $C(\partial_F\Gamma)$ sits inside $\mathbb{B}(\ell^2(\Gamma))$ as multiplication operators, and under this identification, the action $\Gamma\curvearrowright C(\partial_F\Gamma)$ is implemented by the conjugation action, we obtain that
\[\varphi(\lambda(s)f\lambda(s)^*)=\varphi(s.f)=\nu(s.f), \forall f\in C(\partial_F\Gamma).\]
This shows that $s^{-1}\nu\in \text{Hype}_{\tau_0}(\M)|_{C(\partial_F\Gamma)}$ for every $s\in\Gamma$. Consequently, it follows that $\text{Hype}_{\tau_0}(\M)|_{C(\partial_F\Gamma)}$ is $\Gamma$-invariant.

Since $\text{Hype}_{\tau_0}(\M)|_{C(\partial_F\Gamma)}$ is a $\Gamma$-invariant non-empty closed convex subset of $\text{Prob}(\partial_F\Gamma)$ and $\text{Prob}(\partial_F\Gamma)$ is irreducible (see the discussion in Subsection~\ref{subsec:boundary}), we obtain that $\text{Hype}_{\tau_0}(\M)|_{C(\partial_F\Gamma)}=\text{Prob}(\partial_F\Gamma)$.  That is to say, $S^{\M}_{\tau_0}(C(\partial_F\Gamma))=\text{Prob}(\partial_F\Gamma)$ (see \thref{invstate}).

This, in particular, says that every probability measure $\nu$ on $\partial_F\Gamma$ can be realized as $\varphi|_{C(\partial_F\Gamma)}$ for some $\varphi\in \text{Hype}_{\tau_0}(\M)$. 
We now proceed to show that $\M\subset L(\text{Rad}(\Gamma))$. To do so, it is enough to show that $\tau_0(a\lambda(s)^*)=0$ for all $a \in \M$ and $s\in \Gamma\setminus (\text{Rad}(\Gamma))$. Let $a\in \M$ and choose $s\in \Gamma\setminus\text{Rad}(\Gamma)$. Since $\text{Rad}(\Gamma) = \text{Ker}\left(\Gamma\curvearrowright\partial_F\Gamma\right)$, there exists a point $x\in \partial_F\Gamma$ such that $sx\ne x$. Since $S_{\tau_0}^{\M}(C(\partial_F\Gamma))=\text{Prob}(\partial_F\Gamma)$, $\delta_x\in S_{\tau_0}^{\M}(C(\partial_F\Gamma))$. Therefore, we can now use \thref{singularity} to conclude that $\tau_0(a\lambda(s)^*)=0$. Since $s\in\Gamma\setminus\text{Rad}(\Gamma)$ is arbitrary, we see that $\M\subset L\left(\text{Rad}(\Gamma)\right)$. 
\end{proof}
\begin{cor}
\thlabel{subfactors}
Let $\Gamma$ be a countable group with trivial amenable radical. Then every $\Gamma$-invariant von Neumann subalgebra $\M\subseteq L(\Gamma)$ is a subfactor. 
\begin{proof}
Let $\mathcal{M}\subseteq L(\Gamma)$ be a $\Gamma$-invariant subalgebra. Then $\mathcal{Z}(\M)$, the center of $\M$, is a $\Gamma$-invariant, amenable subalgebra. It now follows from \thref{maximalamenable} that $\mathcal{Z}(\M)\subseteq L(\text{Rad}(\Gamma))=\mathbb{C}$.
\end{proof}
\end{cor}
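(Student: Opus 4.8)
The plan is to deduce this directly from \thref{maximalamenable} by passing to the center. Let $\M \subseteq L(\Gamma)$ be a $\Gamma$-invariant von Neumann subalgebra, and let $\mathcal{Z}(\M) = \M \cap \M'$ be its center. First I would verify that $\mathcal{Z}(\M)$ is again $\Gamma$-invariant: for every $g \in \Gamma$ the conjugation $x \mapsto \lambda(g)\,x\,\lambda(g)^*$ is a $*$-automorphism of $L(\Gamma)$ which, since $\M$ is $\Gamma$-invariant, restricts to a $*$-automorphism of $\M$; as a $*$-automorphism of $\M$ necessarily carries $\mathcal{Z}(\M)$ onto $\mathcal{Z}(\M)$ (the center being intrinsically characterized as the elements of $\M$ commuting with all of $\M$), we conclude that $\lambda(g)\,\mathcal{Z}(\M)\,\lambda(g)^* = \mathcal{Z}(\M)$ for all $g$.

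Next I would note that $\mathcal{Z}(\M)$, being an abelian von Neumann algebra with separable predual, is amenable: it is isomorphic to some $L^\infty(Y,\eta)$ and in particular admits a hypertrace, so it belongs to $\amsuba$. Thus $\mathcal{Z}(\M)$ is a $\Gamma$-invariant amenable subalgebra of $L(\Gamma)$, and \thref{maximalamenable} gives $\mathcal{Z}(\M) \subseteq L(\text{Rad}(\Gamma))$. By hypothesis $\text{Rad}(\Gamma) = \{e\}$, so $L(\text{Rad}(\Gamma)) = \mathbb{C}1$; hence $\mathcal{Z}(\M) = \mathbb{C}1$, i.e. $\M$ is a factor.

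I do not expect any genuine obstacle here: the substance of the corollary is carried entirely by \thref{maximalamenable}, and the only auxiliary inputs are the two standard facts used above, namely that a $*$-automorphism of $L(\Gamma)$ preserving $\M$ also preserves $\mathcal{Z}(\M)$, and that abelian von Neumann algebras are amenable.
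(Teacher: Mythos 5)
Your proposal is correct and follows exactly the paper's argument: pass to the center $\mathcal{Z}(\M)$, observe that it is $\Gamma$-invariant and amenable (being abelian), and apply \thref{maximalamenable} together with $\text{Rad}(\Gamma)=\{e\}$ to conclude $\mathcal{Z}(\M)=\mathbb{C}1$. The extra detail you supply on why conjugation preserves the center and why abelian algebras are amenable is just a fuller write-up of the same proof.
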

\section{Amenable Invariant Random Algebras}
\label{sec:IRAs}
Using the following proposition, we see IRAs of $L(\Gamma)$ naturally extend IRSs.
\begin{prop}\thlabel{ex:IRAfromIRS}
Let $\Gamma$ be a countable discrete group and $\sub(\Gamma)$, the collection of all subgroups of $\Gamma$. Consider the map $L:\sub(\Gamma)\to\suba(L(\Gamma))$ defined by $H\mapsto L(H),~H\in\sub(\Gamma)$. $L$ is continuous with respect to the Chabauty topology on $\sub(\Gamma)$ and Effros-Maréchal topology on $\suba(L(\Gamma))$.
\begin{proof}
Let $H_n\in\sub(\Gamma)$ be such that $H_n\to H$ in the Chabauty topology. We show $L(H_n)\to L(H)$ in the Effros-Maréchal topology. Let $x\in L(H)$ be given and choose $\epsilon>0$. Then, we can find $h_1,h_2,\ldots,h_m\in H$ and $c_1,c_2,\ldots,c_m\in\mathbb{C}$ such that $\|x-\sum_{i=1}^mc_i\lambda(h_i)\|_2<\epsilon$. Using the Uniform boundedness principle (or Kaplansky density theorem), we can assume that $\left\|\sum_{i=1}^mc_i\lambda(h_i)\right\|\le \|x\|$. We remark that the $\|\cdot\|_2$-norm is with respect to the canonical trace $\tau_0$ on $L(\Gamma)$. Since $H_n\to H$, there exists a $n_0\in\mathbb{N}$ such that for all $n\ge n_0$, $h_i\in H_n$ for all $i=1,2,\ldots,m$. Consequently, it follows that $\sum_{i=1}^mc_i\lambda(h_i)\in L(H_n)$ for all $n\ge n_0$. Hence, for all $n\ge n_0$,  we can find $x_n=\sum_{i=1}^nc_i\lambda(h_i)\in L(H_n)$ such that $x_n\xrightarrow[]{\text{so}}x$. We note that $\|x_n\|\le\|x\|$. On the ball $B(0,\|x\|)$, the $\so$ and $\sostar$topology coincide. So, $x_n\xrightarrow[]{\text{so}^*}x$. Therefore, $x\in\liminf_nL(H_n)$.\\
Now, let $x\in \limsup_n L(H_n)$ be such that  we can find $x_n\in L(H_n)$ with $x_n\xrightarrow{\text{wo}}x$. Let $g\in\Gamma\setminus H$. Since $H_n\to H$, there exists a subsequence $\{n_k\}_k$ such that $g\not\in H_{n_k}$ for all $k\in\mathbb{N}$. Let $\tau_0$ denote the canonical trace on $L(\Gamma)$. Since $\tau_0(\cdot)=\langle(\cdot)\delta_e,\delta_e\rangle$, $\tau_0$ is wo-continuous (see~\cite[Proposition~2.1.1]{ADP}). Since $x_{n_k}\lambda(g^{-1})\xrightarrow[]{\text{wo}}x\lambda(g^{-1})$, it follows that $\tau_0\left(x_{n_k}\lambda(g^{-1})\right)\xrightarrow{k\to\infty}\tau_0\left(x\lambda(g^{-1})\right)$. Since $g\not\in H_{n_k}$ and $x_{n_k}\in L(H_{n_k})$, for every $k\in\mathbb{N}$, we see that 
\[\tau_0\left(x_{n_k}\lambda(g^{-1})\right)=\tau_0\left(\mathbb{E}_{H_{n_k}}(x_{n_k}\lambda(g^{-1}))\right)=\tau_0\left((x_{n_k}\mathbb{E}_{H_{n_k}}(\lambda(g^{-1}))\right)=0,\]
where $\mathbb{E}_H:L(\Gamma)\longrightarrow L(H)$ denotes the canonical conditional expectation, confer \cite[Theorem~9.1.2]{ADP}.
This implies that $\tau_0(x\lambda(g^{-1}))=0$ for all $g\in\Gamma\setminus H$. This implies that $x\in L(H)$. Therefore, 
\begin{align*}&\limsup_n L(H_n)\\&=\left\langle\{x\in L(H)\ \vert  \ \exists (x_n)_{n\in\mathbb{N}} \in l^{\infty}(\mathbb{N}, L(H_n)) \, : \, \wo\lim\limits_{n\to \infty}x_n=x\}\right\rangle\\&\subseteq L(H).\end{align*}
Consequently, we see that 
\[L(H)\subseteq \liminf_n L(H_n)\subseteq \limsup_n L(H_n)\subseteq L(H).\]
It now follows from Definition~\ref{EM top} that $\lim_n L(H_n)=L(H)$ in the Effros-Maréchal topology, which in turn implies that the map $L$ is continuous. This completes the proof.
\end{proof}
\end{prop}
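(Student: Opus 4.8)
The plan is to establish the two inclusions that together characterize convergence in the Effros-Mar\'echal topology (Definition~\ref{EM top}): assuming $H_n\to H$ in the Chabauty topology, I would show $L(H)\subseteq\liminf_n L(H_n)$ and $\limsup_n L(H_n)\subseteq L(H)$, and since always $\liminf_n L(H_n)\subseteq\limsup_n L(H_n)$, these force $\lim_n L(H_n)=L(H)$. The one tool I would invoke about Chabauty convergence is its elementary description for discrete groups: $H_n\to H$ means that for every $g\in\Gamma$ one has $g\in H_n$ for all large $n$ when $g\in H$, and $g\notin H_n$ for all large $n$ when $g\notin H$, i.e.\ pointwise convergence of the indicator functions $\mathbf{1}_{H_n}$.

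For the $\liminf$ inclusion I would fix $x\in L(H)$. Finite $\mathbb{C}$-linear combinations of $\{\lambda(h):h\in H\}$ are $\|\cdot\|_2$-dense in $L(H)$, and by the Kaplansky density theorem such combinations can be chosen with operator norm at most $\|x\|$; so for each $k$ I would produce $\xi_k=\sum_i c_i^{(k)}\lambda(h_i^{(k)})$ with $h_i^{(k)}\in H$, $\|\xi_k\|\le\|x\|$ and $\|x-\xi_k\|_2<1/k$. Since only finitely many group elements occur in $\xi_k$ and each lies in $H$, there is an index $N_k$ with $\xi_k\in L(H_n)$ for all $n\ge N_k$; a diagonal choice then gives a bounded sequence $x_n\in L(H_n)$ with $\|x-x_n\|_2\to0$. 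On a bounded subset of a tracial von Neumann algebra $\|\cdot\|_2$-convergence implies $\mathrm{so}^*$-convergence, so $x\in\liminf_n L(H_n)$.

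For the $\limsup$ inclusion the key point is that $y\in L(H)$ if and only if the Fourier coefficient $\tau_0\big(y\lambda(g)^*\big)=\langle y\delta_e,\delta_g\rangle$ vanishes for every $g\in\Gamma\setminus H$, and that each functional $y\mapsto\tau_0\big(y\lambda(g)^*\big)$ is $\mathrm{wo}$-continuous, being a vector functional. Given a bounded sequence $x_n\in L(H_n)$ with $x_n\to x$ weakly and a fixed $g\notin H$, one has $g\notin H_n$ for all large $n$, and for such $n$ the module property of the conditional expectation $\mathbb{E}_{H_n}$ together with $\mathbb{E}_{H_n}(\lambda(g)^*)=0$ gives $\tau_0\big(x_n\lambda(g)^*\big)=\tau_0\big(\mathbb{E}_{H_n}(x_n\lambda(g)^*)\big)=0$; passing to the limit yields $\tau_0\big(x\lambda(g)^*\big)=0$. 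Hence every weak limit of a bounded sequence drawn from the $L(H_n)$ lies in $L(H)$, and as $L(H)$ is itself a von Neumann algebra it contains the von Neumann algebra these limits generate, i.e.\ $\limsup_n L(H_n)\subseteq L(H)$. Chaining $L(H)\subseteq\liminf_n L(H_n)\subseteq\limsup_n L(H_n)\subseteq L(H)$ then gives $L(H_n)\to L(H)$ and thus the continuity of $L$.

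The argument is not deep, but its content lies in two places. In the $\liminf$ step it is the diagonal construction together with the (routine but essential) passage from $\|\cdot\|_2$-convergence to $\mathrm{so}^*$-convergence on bounded sets. In the $\limsup$ step it is the recognition that membership in $L(H)$ is detected precisely by vanishing of Fourier coefficients off $H$ and that these coefficients are $\mathrm{wo}$-continuous; this is the step where I expect the proof to really turn.
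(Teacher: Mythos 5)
Your proposal is correct and follows essentially the same route as the paper: the $\liminf$ inclusion via Kaplansky-bounded finite linear combinations placed in $L(H_n)$ by Chabauty convergence and the passage from $\|\cdot\|_2$- to $\mathrm{so}^*$-convergence on bounded sets, and the $\limsup$ inclusion via vanishing of Fourier coefficients off $H$, their $\mathrm{wo}$-continuity, and the conditional expectation onto $L(H_n)$. Your diagonal construction in the $\liminf$ step is in fact slightly more careful than the paper's phrasing, but the content is identical.
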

\begin{example}[IRAs coming from IRSs]
Let $\lambda\in IRS(\Gamma)$. It follows from \thref{ex:IRAfromIRS} that the map $L:\sub(\Gamma)\to\suba(L(\Gamma))$ is continuous, hence measurable. Since $L$ is equivariant, $L_*\lambda\in \text{IRA}(L(\Gamma))$.
\end{example}
It is not true that every IRA is of this form. We construct such an example below.
The above results are a proper generalization of \cite{BDL} since there are IRAs that are supported on collections of sub-von Neumann algebras that are not of the form $LK$ for any subgroup $K$ of $\Gamma$.
\begin{example}[IRAs not coming from IRSs]
\label{Ex: IRAs not coming from IRSs}
Consider the lamplighter group $$\Gamma=\LL(\mathbb{Z},\mathbb{Z}/3\mathbb{Z})=\mathbb{Z}\ltimes \lamps$$
with $\lamps:=\{f:\mathbb{Z}\longrightarrow \mathbb{Z}/3\mathbb{Z}\text{ with } \vert\supp(f)\vert <\infty\}$. 
For any $S\subseteq \mathbb{Z}$ set $$\lamps_S:=\{f\in \lamps\, :\, \supp(f)\subseteq S\}.
$$
Observe that 
$\Lambda_S:=\{0\}\ltimes \lamps_S$ is a group with $(0,f)\cdot (0,h)=(0,f+h)$ for any $f,h\in \lamps_S$.
Let  $\sigma:\mathbb{Z}/3\mathbb{Z}\longrightarrow \mathbb{Z}/3\mathbb{Z}$ be the non-trivial homomorphism. Then, the set
$$
\mathcal{M}_{S}:= \overline{\{ \sum\limits_{f\in \lamps_S} a_f \lambda((0,f))\ :\ a_f=a_{\sigma\circ f}, \ a_f\in\mathbb{C}\}
}^{\text{wo}}
$$
is a non-trivial von Neumann subalgebra of $L(\Lambda_{S})$. Moreover, we also observe that $\{\mathcal{M}_S: S\subset\mathbb{Z}\}$ is $\Gamma$-invariant.
Indeed, let $$x:=\sum\limits_{f\in \lamps_S} a_f \lambda((0,f))\in \mathcal{M}_{S},$$ 
then $$x^*=\sum\limits_{f\in \lamps_S} \overline{a_f}\lambda((0,-f))\ \in \mathcal{M}_{S},
$$
because the coefficient at the position $(0,-f)$ is $c_{-f}:=\overline{a_{f}}= \overline{a_{\sigma(f)}}=c_{-\sigma(f)}=c_{\sigma(-f)},$ since by assumption $a_f=a_{\sigma(f)}$.
Further, for $y=\sum\limits_{h\in \lamps_S} \overline{b_h}\lambda((0,h))\ \in \mathcal{M}_{S},$
we see that $$xy=\sum\limits_{h,f\in \lamps_S} a_f b_h\lambda((0,f+h))\ \in \mathcal{M}_{S},$$
due to $d_{f+h}:=a_f b_h=a_{\sigma(f)}b_{\sigma(h)}=d_{\sigma(f)+\sigma(h)}=d_{\sigma(f+h)}.$

Now, as pointed out by \cite[Example~3.5]{amrutam2023invariant},  for $\sigma\neq id$, there is no sub-group $K$ of $\Gamma$ such that $\mathcal{M}_S$ can be written as $L(K)$. Indeed, if we had $\mathcal{M}_S=L(K)$, then for any $(0,f)\in K$ we would obtain e.g. $5\lambda(0,-f)+\lambda(0,f)\in \mathcal{M}_S$ but by construction $a_f=a_{-f}$ which is a contradiction.
\\
Moreover, $\mathcal{M}_S$ is clearly $\Gamma$-invariant. 
\\
Let us consider the map 
$$\psi: \{0,1\}^{\mathbb{Z}} \longrightarrow \suba(L(\Gamma)), \ S\mapsto \mathcal{M}_S.
$$ 
Then, $\psi$ is Borel measurable w.r.t. Effros-Maréchal topology on $\suba(L(\Gamma))$: For $S\in \{0,1\}^{\mathbb{Z}}$, let $H_S:=\{(0,f)\ : \ \supp(f)\subseteq S\}$. We first observe that the map $\text{Sub}: \{0,1\}^{\mathbb{Z}}\to\text{Sub}(\Gamma),~S\mapsto H_S$  
is a continuous map with respect to the Chabauty topology. Furthermore, it follows from Proposition~\ref{ex:IRAfromIRS} above that the map $L: \sub(\Gamma)\to \suba(L(\Gamma)),~H_S\mapsto L(H_S)$ is continuous. Let $j:\suba(L(\Gamma))\to\suba(L(\Gamma))\times\suba(L(\Gamma))$ be given by $j(\mathcal{M})=\left(\mathcal{M},\mathcal{M}_{\mathbb{Z}}\right)$. We equip $\suba(L(\Gamma))\times\suba(L(\Gamma))$ with the product topology. $j$ is continuous with respect to the Effros-Maréchal topology. It follows from \cite[Corollary~2]{Effros} that the map \[\text{Ints}:\suba(L(\Gamma))\times \suba(L(\Gamma))\longrightarrow \suba(L(\Gamma)), (\vNm,\vNn)\mapsto \vNm\cap \vNn\] is Borel measurable w.r.t. the Effros-Maréchal topology. Therefore, $$\text{Ints}\circ j\circ L\circ\text{Sub}:\{0,1\}^{\mathbb{Z}}\to\suba(L(\Gamma)),~S\mapsto \mathcal{M}_{\mathbb{Z}}\cap L(H_S)$$ being a composition of measurable maps is measurable.  Let us now observe that
\begin{align*}
&\mathcal{M}_{\mathbb{Z}}\cap L( H_S)\\&= \overline{\{ \sum\limits_{f\in \lamps} a_f \lambda((0,f))\ :\ a_f=a_{\sigma(f)}, \ a_f\in\mathbb{C}\}
\cap \spa(\lambda(H_S)) }^{\text{wo}} \\&=\mathcal{M}_S.   
\end{align*}
Therefore, the map $\psi=\text{Ints}\circ j\circ L\circ\text{Sub}$ is measurable.\\ 
Moreover, $\psi$ is $\Gamma$-equivariant, where $\Gamma$ acts by shifting on $\{0,1\}^{\mathbb{Z}}$: The map $L$ is clearly $\Gamma$-equivariant. Since $\mathcal{M}_{\mathbb{Z}}$ is a $\Gamma$-invariant subalgebra, the maps $\text{Ints}$ and $j$ are $\Gamma$-equivariant. Therefore, it is enough to show that the map $\sub$ is $\Gamma$-equivariant. Let $g=(m,h)\in \Gamma$, then $g\cdot S=(m,0)S$ shifts the position of the ones in the set $S$ to the left by $m\in\mathbb{Z}$.
 So, $$H_{gS}=\{ (0,f)\ : \ \supp(f)\subseteq gS\}
 =
 \{(0,f)\ : \ \supp(g^{-1}f)\subseteq S\}
 $$ $$=
 \{(0,mf)\ : \ \supp(f)\subseteq S\}
 = gH_S g^{-1} 
 $$ since $g(0,f)g^{-1}=(m-m, h+mf-h)=(0,mf)$ for $g=(m,h)$. 
\\
Therefore, we can push forward any $\Gamma$-invariant probability measure $\kappa$ on $\{0,1\}^{\mathbb{Z}}$, e.g. the Bernoulli measure, to obtain an IRA on $\suba(L(\Gamma))$ via the map $\psi$. Observe that by the above, $\psi_*\kappa$ cannot be purely supported on von Neumann subalgebras of the form $L(K)$ for some subgroup $K$ of $\Gamma$ since it lives on subalgebras of the form $\mathcal{M}_S$ for  $S\in \{0,1\}^{\mathbb{Z}}$.
\end{example}
\subsection{Normal closure of IRAs}
In \cite{YairOmer} the so-called \textit{normal closure} of an {\IRS}s was introduced. We mimic this concept in the von Neumann algebra setting for $\IRA$s.
\\Let $\M$ be a von Neumann algebra with separable Hilbert space on which $\Gamma$ acts.

\begin{lemma}
\thlabel{normalclosure}
Let $\mu\in \IRA(\M)$. 
Then there exists a unique minimal sub-von Neumann algebra $\vNn\in\suba(\mathcal{M})$ such that $\mu(\suba(\mathcal{N}))=1$. Moreover, this von Neumann algebra $\vNn$ is $\Gamma$-invariant. We denote this von Neumann algebra $\vNn$ by $\overline{\langle \mu \rangle} $ and call it the \emph{normal closure} of $\mu$.
\end{lemma}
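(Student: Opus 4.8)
The plan is to construct $\vNn$ explicitly rather than as an abstract intersection. The obvious candidate is $\bigcap\{\cQ\in\suba(\M):\mu(\suba(\cQ))=1\}$, but this is an \emph{uncountable} intersection of conull sets, so it is not clear that it again carries full mass. The remedy is to exploit that, once we fix a faithful normal trace $\tau$ on $\M$ (the setting of~\eqref{exp}; for $\M=L(\Gamma)$ this is $\tau_0$, and we write $L^2(\M)=L^2(\M,\tau)$), a subalgebra $\M'\subseteq\M$ is recovered from the countable family of vectors $\E_{\M'}(d)$, $d\in D$, where $D$ is a fixed countable strong-$*$-dense subset of the unit ball $(\M)_1$. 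This turns the statement ``$\mu$-a.e.\ $\M'\subseteq\cQ$'' into countably many conditions.

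First I would fix such a $D=\{d_1,d_2,\dots\}\subseteq(\M)_1$, self-adjoint and with $1\in D$, dense for the strong-$*$ operator topology (possible since $\M$ has separable predual). For each $d\in D$ the map $\Phi_d\colon\suba(\M)\to L^2(\M)$, $\M'\mapsto\E_{\M'}(d)$, is Borel: by~\eqref{exp}, $\M'_n\to\M'$ forces $\E_{\M'_n}(d)\to\E_{\M'}(d)$ strong-$*$, hence in $\|\cdot\|_2$, so $\Phi_d$ is sequentially continuous, hence Borel. Let $\nu_d:=(\Phi_d)_*\mu$, a Borel probability measure on the separable metric space $L^2(\M)$, and let $S_d$ be its topological support. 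Since $\Phi_d$ takes values in $(\M)_1$, which is $\|\cdot\|_2$-closed in $L^2(\M)$, we get $S_d\subseteq(\M)_1$. Define
\[
\vNn:=\Big\langle\,\bigcup_{d\in D}S_d\,\Big\rangle\subseteq\M,
\]
a unital von Neumann subalgebra of $\M$ (it is self-adjoint by the choice of $D$).

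Next I would verify the three claims. \emph{(i) Full measure.} Each $\Phi_d^{-1}(S_d)$ is conull, so for $\mu$-a.e.\ $\M'$ we have $\E_{\M'}(d)\in S_d\subseteq\vNn$ for \emph{every} $d\in D$; fix such an $\M'$ and $y\in\M'$. Writing $y=\|y\|\,z$ with $z\in(\M')_1$ and picking $d_k\in D$ with $d_k\to z$ strong-$*$, normality of $\E_{\M'}$ gives $z=\E_{\M'}(z)=\lim_k\E_{\M'}(d_k)$ weakly in $L^2(\M)$, and since the $\E_{\M'}(d_k)$ lie in the bounded weakly closed set $(\vNn)_1$, so does $z$; hence $y\in\vNn$, i.e.\ $\M'\subseteq\vNn$. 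Therefore $\mu(\suba(\vNn))=1$. \emph{(ii) Minimality.} If $\cQ\in\suba(\M)$ satisfies $\mu(\suba(\cQ))=1$, then $\Phi_d$ sends the conull set $\{\M':\M'\subseteq\cQ\}$ into $(\cQ)_1$, so $\nu_d((\cQ)_1)=1$; as $(\cQ)_1$ is $\|\cdot\|_2$-closed, $S_d\subseteq\cQ$ for all $d$, whence $\vNn\subseteq\cQ$. Thus $\vNn$ is contained in every competitor, giving existence and uniqueness of the minimal subalgebra. \emph{(iii) Invariance.} For $g\in\Gamma$ one has $\suba(g\vNn)=g\cdot\suba(\vNn)$ and $\mu$ is $\Gamma$-invariant, so $g\vNn$ again has full measure; minimality yields $\vNn\subseteq g\vNn$ for all $g$, and applying this with $g^{-1}$ gives $g\vNn\subseteq\vNn$, so $g\vNn=\vNn$. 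We then set $\overline{\langle\mu\rangle}:=\vNn$.

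The crux is the reduction described in the first paragraph: a von Neumann subalgebra is pinned down by its conditional expectations on a countable dense set, so that ``almost surely contained in $\cQ$'' is detected by the countably many pushforwards $\nu_d$ and their supports — this is precisely what lets the uncountable intersection be bypassed. The auxiliary facts used (Borel-ness of $\Phi_d$ via~\eqref{exp}, $\|\cdot\|_2$-closedness of operator-norm balls, normality of conditional expectations, and the elementary fact that $\{\M'\in\suba(\M):\M'\subseteq\cQ\}$ is closed — hence Borel — in the Effros-Maréchal topology, so that the measures above make sense) are all standard for tracial von Neumann algebras.
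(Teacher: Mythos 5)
Your argument is correct in the tracial setting and it reaches the conclusion by a genuinely different route from the paper. The paper takes the naive candidate $\vNn=\bigcap\{\tilde\M:\mu(\suba(\tilde\M))=1\}$ head-on and resolves the uncountability problem topologically: $\suba(\tilde\M)$ is Effros--Mar\'echal closed, $\suba(\M)$ is a separable metric (hence hereditarily Lindel\"of) space, so the open complement of $\suba(\vNn)$ is covered by countably many balls each disjoint from some $\suba(\M_{\cK_n})$, and $\suba(\vNn)\supseteq\bigcap_n\suba(\M_{\cK_n})$ is then a countable intersection of conull sets. You instead build $\vNn$ from the countably many pushforwards $\nu_d=(\Phi_d)_*\mu$ of $\mu$ under $\M'\mapsto\E_{\M'}(d)$ and their supports; your verification of (i)--(iii) is sound (the key facts --- continuity of $\Phi_d$ from~\eqref{exp}, $\|\cdot\|_2$-closedness and weak closedness of operator-norm balls, $\|\cdot\|_2$-continuity of $\E_{\M'}$ on bounded sets, and closedness of $\suba(\cQ)$ --- are all standard), and minimality plus invariance come out exactly as in the paper. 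What your approach buys is an explicit, more ``constructive'' description of $\overline{\langle\mu\rangle}$ as the algebra generated by the essential ranges of the conditional expectations of a countable dense family, closer in spirit to how one handles normal closures of IRSs. What it costs is generality: the lemma is stated for an arbitrary $\Gamma$-von Neumann algebra $\M$ on a separable Hilbert space, whereas your proof leans on a faithful normal trace (for $L^2(\M,\tau)$, the trace-preserving conditional expectations, and~\eqref{exp}, which the paper only records for finite algebras); the paper's Lindel\"of argument needs none of this. Since the lemma is only applied to $\M=L(\Gamma)$, this restriction is harmless there, but you should state it explicitly if you present the proof this way.
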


\begin{proof} We follow the argument outlined in \cite[Lemma~2.3]{glasner2023faithful} and adapt it to the von Neumann setting.
Let us denote  
$$\mathcal{W}=\left\{\tilde{\mathcal{M}}\in\text{Sub}(\M): \mu(\suba(\tilde{\mathcal{M}}))=1\right\}.$$
Let $$\vNn:=\bigcap\limits_{\tilde{\M}\in\mathcal{W}} \Tilde{\M}.$$ Clearly, $\vNn$ is a von Neumann algebra again. Moreover, for $\Tilde{\mathcal{M}}\in\mathcal{W}$ and $s\in\Gamma$, it follows from the $\Gamma$-invariance of $\mu$ that
\[\mu(\suba(\lambda(s)\tilde{\mathcal{M}}\lambda(s)^*))=\mu(\lambda(s)\suba(\tilde{\mathcal{M}})\lambda(s)^*)=\mu\left(\suba(\tilde{\mathcal{M}})\right)=1\]
Therefore, $\mathcal{N}$ is $\Gamma$-invariant. It is unique since any algebra $\tilde{\mathcal{M}}$ fulfilling $\mu(\suba(\tilde{\M}))=1$ contains $\vNn$, and we want a minimal one. It is left to show that $\mu(\suba(\vNn))=1$.
We now observe that $$\suba(\mathcal{N})=\cap_{\tilde{\mathcal{M}}\in\mathcal{W}}\suba(\tilde{\mathcal{M}}).$$ 
Now, let $\mathcal{K}\in \suba(\M)\setminus\suba(\mathcal{N})$. This means that there exists $\mathcal{M}_{\mathcal{K}}\in\mathcal{W}$ such that $\mathcal{K}\not\in\suba(\mathcal{M}_{\mathcal{K}})$. Since $\suba(\mathcal{M}_{\mathcal{K}})$ is closed, we can find $\epsilon_{\mathcal{K}}>0$ such that 
\begin{equation}
 \label{emptyintersection}   B(\mathcal{K},\epsilon_{\mathcal{K}})\cap\suba(\mathcal{M}_{\mathcal{K}})=\emptyset.
\end{equation}
In particular, we see that
\[\suba(\M)\setminus\suba(\mathcal{N})=\bigcup_{\mathcal{K}\in\suba(\M)\setminus\suba(\mathcal{N})}B(\mathcal{K},\epsilon_{\mathcal{K}})\]
Since $\suba(\M)$ is a separable metric space (see \cite[Corollaire 2]{Marechal}), hence, second countable, and therefore, hereditarily Lindelöf. Since $\suba(\mathcal{N})$ is closed in Effros-Maréchal topology (\cite[Theorem~2.8]{haagerup1998effros}), $\suba(\M)\setminus\suba(\mathcal{N})$ is open. In a hereditary Lindelöf space, every open set is Lindelöf. Therefore, we can find $\mathcal{K}_n\in\suba(\M)\setminus\suba(\mathcal{N})$ and $\epsilon_{\mathcal{K}_n}>0$ such that 
\begin{equation}
\label{countableunion}
\suba(\M)\setminus\suba(\mathcal{N})=\bigcup_n B(\mathcal{K}_n,\epsilon_{\mathcal{K}_n})
\end{equation}
Moreover, using equation~\eqref{emptyintersection}, we see that $\suba(\mathcal{M}_{\mathcal{K}_n})\subseteq \left(B(\mathcal{K},\epsilon_{\mathcal{K}_n})\right)^c$ for each $n\in\mathbb{N}$. Therefore,
\begin{equation}
\label{oneinclusion}
 \bigcap_n\suba(\mathcal{M}_{\mathcal{K}_n})\subseteq\bigcap_n\left(B(\mathcal{K},\epsilon_{\mathcal{K}_n})\right)^c   
\end{equation}
Putting together equations~\eqref{countableunion} and \eqref{oneinclusion}, we see that 
\[ \bigcap_n\suba(\mathcal{M}_{\mathcal{K}_n})\subseteq\bigcap_n\left(B(\mathcal{K},\epsilon_{\mathcal{K}_n})\right)^c\subseteq\suba(\mathcal{N})\]
Therefore,
$$\mu(\suba(\vNn))\geq \lim\limits_{m\to\infty} \mu(\bigcap_{n=1}^m\suba(\M_{\vNk_n}))=1,
$$ since $\mu\left(\suba(\M_{\vNk_n})^c\right)=0$ by construction.
\end{proof}
\begin{remark}\label{spanning} Let $\vN=L(\Gamma)$ and $\mu\in IRA(\vN)$. Let \[H_0=\{ g\in\Gamma\ :\  \mu(\{\vNm\ni  \lambda(g)\})>0\}.\]
If $H$ is the subgroup of $\Gamma$ generated by $H_0$, then,
$\overline{\langle \mu \rangle} \supseteq
L(H)$. Indeed, let us write for simplicity $\{\vNm\ni m\}:=\{\vNm\in\suba(L(\Gamma))\, : \, m\in \vNm\}$ for $m\in L(\Gamma)$ and first verify that this is a measurable set. It is closed in the Effros-Maréchal topology: If $\vNm_{\alpha}\in \{\vNm\ni m\}$ with $\vNm_{\alpha}\to \vNm$, then by Definition~\ref{EM top} the elements in $\vNm$ are precisely the $\sostar$limit points of elements in $\vNm_{\alpha}$, in particular the constant sequence $x_{\alpha}=m$ converges and thus $m\in \vNm$. Assume there exists some $g\in\Gamma $ such that $\lambda(g)\notin \overline{\langle \mu \rangle}$. Then clearly, $\suba( \overline{\langle \mu \rangle})\cap \{\mathcal{M}\ni \lambda(g)\}=\emptyset $. Since $\mu(\suba(\overline{\langle \mu \rangle}))=1$ we thus obtain $\mu(\{\mathcal{M}\ni \lambda(g)\})=0 $. Therefore $g\notin H_0$. Thus $L(H)\subseteq \overline{\langle \mu \rangle}$.
\end{remark}

\begin{remark}
Let us recall that $\Gamma$ is said to have Invariant Subgroup Rigidity property (ISR-property) if every $\Gamma$-invariant subalgebra $\mathcal{M}\subseteq L(\Gamma)$ is of the form $L(N)$ for some normal subgroup $N\triangleleft\Gamma$. This property was studied in the context of higher rank lattices in \cite{alekseev2019rigidity, kalantar2022invariant} and in the context of hyperbolic groups in \cite{amrutam2023invariant,chifan2022invariant}. 

If $\Gamma$ has the  ($\ISR$)-property, then, the von Neumann algebra $\overline{\langle\mu\rangle}$ is of the form $L(N)$ for some normal subgroup $N$ of $\Gamma$ with $N\geq H$ as in Remark~\ref{spanning} above.  
\end{remark}
\subsection{Closedness of amenable subalgebras}
\begin{prop}
\thlabel{closedamenablecollection} Let $(\mathcal{M},\tau)$ be a finite von Neumann algebra with a separable predual. Then, the collection of amenable subalgebras of $\mathcal{M}$ is closed in the Effros-Maréchal topology.
\end{prop}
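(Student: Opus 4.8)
The plan is to realize an $\mathcal{N}$-hypertrace as a weak$^*$-cluster point of $\mathcal{N}_n$-hypertraces. Since $\suba(\mathcal{M})$ is a separable metric space (\cite[Corollaire~2]{Marechal}), it suffices to show that whenever $\mathcal{N}_n\to\mathcal{N}$ in the Effros-Maréchal topology with each $\mathcal{N}_n$ amenable, the limit $\mathcal{N}$ is again amenable. I would first apply \thref{keylink} to each $\mathcal{N}_n\subseteq\mathcal{M}$: this produces a state $\varphi_n\in S(\mathbb{B}(L^2(\mathcal{M},\tau)))$ with $\varphi_n|_{\mathcal{M}}=\tau$ that is $\mathcal{N}_n$-central, i.e.\ $\varphi_n(xT)=\varphi_n(Tx)$ for all $x\in\mathcal{N}_n$ and $T\in\mathbb{B}(L^2(\mathcal{M},\tau))$. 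Fixing a free ultrafilter $\omega\in\beta\mathbb{N}\setminus\mathbb{N}$ and setting $\varphi:=\lim_{n\to\omega}\varphi_n$, which exists by weak$^*$-compactness of the state space, one obtains a state on $\mathbb{B}(L^2(\mathcal{M},\tau))$ with $\varphi|_{\mathcal{M}}=\tau$, and in particular $\varphi|_{\mathcal{N}}=\tau|_{\mathcal{N}}$.

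The heart of the matter is to upgrade the \emph{moving} centrality of the $\varphi_n$ --- each central only over its own $\mathcal{N}_n$ --- to centrality of $\varphi$ over the \emph{fixed} algebra $\mathcal{N}$. Fix $x\in\mathcal{N}$ and $T\in\mathbb{B}(L^2(\mathcal{M},\tau))$. Because $\mathcal{N}=\liminf_n\mathcal{N}_n$ in the sense of Definition~\ref{EM top}, there is a bounded sequence $(x_n)$ with $x_n\in\mathcal{N}_n$ and $x_n\to x$ in the strong-$*$ topology; strong convergence then forces $\|x_n-x\|_\tau\to 0$, where $\|y\|_\tau:=\tau(y^*y)^{1/2}$ (note $x_n-x\in\mathcal{M}$). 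Using that $\varphi_n$ is a state with $\varphi_n|_{\mathcal{M}}=\tau$, so that $\varphi_n$ is tracial on $\mathcal{M}$, the Cauchy--Schwarz inequality yields
\begin{align*}
|\varphi_n((x_n-x)T)|&\le \varphi_n\big((x_n-x)(x_n-x)^*\big)^{1/2}\,\varphi_n(T^*T)^{1/2}\\
&\le \|x_n-x\|_\tau\,\|T\|,
\end{align*}
and symmetrically $|\varphi_n(T(x_n-x))|\le\|x_n-x\|_\tau\,\|T\|$; both bounds tend to $0$. Since $x_n\in\mathcal{N}_n$ gives $\varphi_n(x_nT)=\varphi_n(Tx_n)$, we conclude
\begin{align*}
\varphi(xT)&=\lim_{n\to\omega}\varphi_n(xT)=\lim_{n\to\omega}\varphi_n(x_nT)\\
&=\lim_{n\to\omega}\varphi_n(Tx_n)=\lim_{n\to\omega}\varphi_n(Tx)=\varphi(Tx).
\end{align*}
Hence $\varphi$ is an $\mathcal{N}$-hypertrace on $\mathbb{B}(L^2(\mathcal{M},\tau))$ with $\varphi|_{\mathcal{N}}=\tau|_{\mathcal{N}}$. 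As $\mathcal{N}$ acts faithfully and normally on $L^2(\mathcal{M},\tau)$ and the existence of a hypertrace is independent of the chosen faithful normal representation (cf.\ \cite[Proposition~10.2.5]{ADP}, \cite{connes1976classification}), $\mathcal{N}$ is amenable; therefore $\suba_{am}(\mathcal{M})$ is closed.

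I expect the second paragraph to be the genuine obstacle. What makes it go through is that \thref{keylink} delivers hypertraces all normalized by the \emph{same} trace $\tau$ on $\mathcal{M}$: this uniform normalization is exactly what allows the Cauchy--Schwarz error $\|x_n-x\|_\tau\,\|T\|$ to be controlled, whereas an unadorned $\mathcal{N}_n$-hypertrace on $\mathbb{B}(L^2(\mathcal{M},\tau))$ would afford no such control. The two auxiliary facts are routine but should be recorded: $\liminf_n\mathcal{N}_n=\mathcal{N}$ supplies the approximants $x_n\in\mathcal{N}_n$ to each $x\in\mathcal{N}$, and strong-$*$ convergence of a bounded sequence implies $\|\cdot\|_\tau$-convergence, so that the approximation occurs in the Hilbert norm where the estimate lives. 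The transition between the ambient spaces $L^2(\mathcal{M},\tau)$ and $L^2(\mathcal{N})$ is harmless and costs only one line.
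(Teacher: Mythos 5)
Your proof is correct and follows essentially the same route as the paper's: apply \thref{keylink} to get hypertraces normalized by $\tau$ on all of $\mathcal{M}$, take a weak$^*$-cluster point, and use the $\liminf$ approximants together with Cauchy--Schwarz (which works precisely because of the uniform normalization $\varphi_n|_{\mathcal{M}}=\tau$) to transfer centrality to the limit algebra. The only difference is that you take an ultrafilter limit where the paper extracts a weak$^*$-convergent subsequence and runs an explicit $\epsilon/6$ estimate; if anything, your version is cleaner, since the state space of $\mathbb{B}(L^2(\mathcal{M},\tau))$ is not weak$^*$-metrizable.
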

\begin{proof}
Let $\mathcal{M}_n$ be a sequence of amenable subalgebras of $\mathcal{M}$ such that $\mathcal{M}_n\to \tilde{\mathcal{M}}$ and $\varphi_{\mathcal{M}_n}$the $\mathcal{M}_n$-hypertraces on $\mathbb{B}(L^2(\mathcal{M},\tau))$ such that $\varphi_{\mathcal{M}_n}|_{\mathcal{M}}=\tau$. The existence is guaranteed by \thref{keylink}. We shall denote $\varphi_{\mathcal{M}_n}$ by $\varphi_n$ for ease of notation. Then, there exists a subsequence $(n_k)$ and $\varphi\in S(\mathbb{B}(L^2(\mathcal{M},\tau)))$ such that 
\[\varphi_{n_k}\xrightarrow[]{\text{weak}^*}\varphi.\]
We shall show that $\varphi\in\text{Hype}_{\tau}(\tilde{\mathcal{M}})$ and this shall complete the proof.  Clearly, $\varphi\vert_{\vNm}=\tau$. Let $x\in\tilde{\mathcal{M}}$ be given and $0\ne T\in\mathbb{B}(L^2(\mathcal{M},\tau))$ be fixed. It follows from Definition~\ref{EM top} that there exist $x_n\in\mathcal{M}_n$ such that $x_n\xrightarrow[n\to\infty]{\text{so}^{*}}x$. In particular, 
$\|x_{n_k}-x\|_{\tau}\xrightarrow{k\to\infty}0$. Let $\epsilon>0$ be given. We can find $k_0\in\mathbb{N}$ such that for all $k,j\ge k_0$,
    $$\|x_{n_k}-x\|_{\tau}<\frac{\epsilon}{6\|T\|}\text{ and }\|x_{n_k}-x_{n_j}\|_{\tau}<\frac{\epsilon}{6\|T\|}.$$ 
Since $\varphi_{n_k}\to\varphi$ in the weak$^*$-topology, we can find $j\ge k_0$ such that 
\begin{equation}
\begin{aligned}
\label{eq:weakstarinequality}
\left\|\varphi_{n_j}\left(x_{n_{k_0}}T\right)-\varphi\left(x_{n_{k_0}}T\right)\right\|<\frac{\epsilon}{6},\\ \left\|\varphi_{n_j}\left(Tx_{n_{k_0}}\right)-\varphi\left(Tx_{n_{k_0}}\right)\right\|<\frac{\epsilon}{6}.  \end{aligned}
\end{equation}
Now, using the triangle inequality, we see that
\begin{align*}
&\left\|\varphi(xT-Tx)\right\|\\&\le  \left\|\varphi(xT)-\varphi(x_{n_{k_0}}T)\right\|+  \left\|\varphi(x_{n_{k_0}}T)-\varphi_{n_{j}}(x_{n_{k_0}}T)\right\|\\&+ \left\|\varphi_{n_{j}}(x_{n_{k_0}}T)-\varphi_{n_{j}}(x_{n_{j}}T)\right\|\\&+\left\|\varphi_{n_{j}}(x_{n_{j}}T)-\varphi_{n_{j}}(Tx_{n_{j}})\right\|+\left\|\varphi_{n_{j}}(Tx_{n_{j}})-\varphi_{n_j}(Tx_{n_{k_0}})\right\|\\&+\left\|\varphi_{n_j}(Tx_{n_{k_0}})-\varphi(Tx_{n_{k_0}})\right\|+\left\|\varphi(Tx_{n_{k_0}})-\varphi(Tx)\right\|
\end{align*}
Using Cauchy-Schwartz inequality, we observe that
\begin{align*}
\left\|\varphi(xT)-\varphi(x_{n_{k_0}}T)\right\|&=\left\|\varphi\left((x-x_{n_{k_0}})T\right)\right\|\\&\le 
\sqrt{\varphi\left((x-x_{n_{k_0}})(x^*-x_{n_{k_0}}^*)\right)}\sqrt{\varphi(T^*T)}
\end{align*}
Since $x,x_{n_{k_0}}\in \vNm$ and $\varphi|_{\mathcal{M}}=\tau$, we see that 
\[\sqrt{\varphi\left((x-x_{n_{k_0}})(x^*-x_{n_{k_0}}^*)\right)}=\|x-x_{n_{k_0}}\|_{\tau}.\]
Moreover, $\sqrt{\varphi(T^*T)}\le \|T\|$. Hence,
\[\left\|\varphi(xT)-\varphi(x_{n_{k_0}}T)\right\|\le \|x-x_{n_{k_0}}\|_{\tau}\|T\|<\frac{\epsilon}{6}.\]
Similarly,
\begin{align*}
\left\|\varphi(Tx)-\varphi(Tx_{n_{k_0}})\right\|
&\le 
\sqrt{\varphi\left((x^*-x_{n_{k_0}}^*)(x-x_{n_{k_0}})\right)}\sqrt{\varphi(TT^*)}\\&\le\|x-x_{n_{k_0}}\|_{\tau}\|T\|<\frac{\epsilon}{6}.
\end{align*}
Also, since $\varphi_{n_j}|_{\mathcal{M}}=\tau$, we see that
\begin{align*}
&\left\|\varphi_{n_{j}}(x_{n_{k_0}}T)-\varphi_{n_{j}}(x_{n_{j}}T)\right\|\\&\le     \sqrt{\varphi_{n_j}\left((x_{n_{k_0}}-x_{n_{j}})(x_{n_{k_0}}^*-x_{n_j}^*)\right)}\sqrt{\varphi_{n_j}(T^*T)}\\&\le\|x_{n_{k_0}}-x_{n_j}\|_{\tau}\|T\|<\frac{\epsilon}{6}.
\end{align*}
Since $\varphi_{n_j}$ is a hypertrace for $\mathcal{M}_{n_j}$, $\varphi_{n_j}(x_{n_j}T)=\varphi_{n_j}(Tx_{n_j})$.
Putting all of these together, we see that
$\|\varphi(xT-Tx)\|<\epsilon$ for every $\epsilon>0$. As a result, $\varphi\in \text{Hype}_{\tau}(\tilde{\mathcal{M}})$.   
\end{proof}

\subsection{Uppersemicontinuity of the Hype-map}

 Let $\mathcal{M}\subseteq L(\Gamma)$ be an amenable von Neumann algebra. Then, $\text{Hype}(\mathcal{M})$, (the collection of all $\mathcal{M}$-hypertraces $\varphi_{\mathcal{M}}$ on $\mathbb{B}(\ell^2(\Gamma))$) is a non-empty weak$^*$-compact and convex subset.
 Let us now fix a metrizable and minimal $\Gamma$-space $X$.
Let
\[\text{Hyp}_X=\left\{\text{Hype}(\mathcal{M})|_{C(X)}: \mathcal{M}\text{ is an amenable subalgebra of }L(\Gamma)\right\}\]

We want to give a convex structure to $\text{Hyp}$, and we do this by following \cite{BDL}. Since $X$ is metrizable and compact, $C(X)$ is separable. Let $\{f_n\}_{n=1}^{\infty}$ be a dense subset of the unit ball of $C(X)$. For each $n\in\mathbb{N}$, we define
\[f_n^{+}(\text{Hype}(\mathcal{M})|_{C(X)})=\sup_{\varphi\in\text{Hype}(\mathcal{M})}\text{Real}(\varphi)(f_n).\]
We can then define the map
\begin{equation}\label{emb} f:\text{Hyp}_X\to \prod_{n}[0,1],~\text{Hype}(\mathcal{M})|_{C(X)}\to \left(f_n^{+}\left(\text{Hype}(\mathcal{M})|_{C(X)}\right)\right)_{n\in\mathbb{N}}
\end{equation}
Applying the Hahn-Banach separation theorem implies that the map $f$ is injective. The topology on $\text{Hyp}_X$ is then induced from the product topology on $\prod_{n}[0,1]$. Moreover, the convex structure of $\prod_n[0,1]$ carries over to $\text{Hyp}_X$ as described in \cite{BDL}.

The action of $\Gamma$  on $\St(\Bl)$ is defined as $$(g\cdot \phi)(x):=\phi( \lambda(g^{-1}) x\lambda(g)),~g\in\Gamma, \phi\in \St(\Bl), x\in \Bl.$$
Let $\text{Hype}_{\tau_0}(\mathcal{M})$ denote the collection of all those hypertraces whose restriction to $L(\Gamma)$ is the canonical trace  $\tau_0$. Note that this set is non-empty thanks to \thref{keylink}.
\begin{lemma}\label{hyp eq} The map $$\hyp:\{\mathcal{M}\in \suba(L(\Gamma)) \, : \, \text{ amenable}\} \longrightarrow \left\{\text{Hype}_{\tau_0}(\mathcal{M})\right\}
$$
is $\Gamma$-equivariant.
\end{lemma}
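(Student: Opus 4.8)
The plan is to unwind both sides of the claimed equivariance. The left-hand side sends an amenable subalgebra $\mathcal{M}$ to $\text{Hype}_{\tau_0}(\mathcal{M})$, the set of $\mathcal{M}$-hypertraces on $\mathbb{B}(\ell^2(\Gamma))$ restricting to $\tau_0$ on $L(\Gamma)$, and $\Gamma$ acts on the domain by conjugation $\mathcal{M}\mapsto \lambda(g)\mathcal{M}\lambda(g)^*$ and on the target by the action $(g\cdot\phi)(x)=\phi(\lambda(g^{-1})x\lambda(g))$. So the content of the lemma is exactly the identity $\text{Hype}_{\tau_0}(\lambda(g)\mathcal{M}\lambda(g)^*) = g\cdot\text{Hype}_{\tau_0}(\mathcal{M})$ for every $g\in\Gamma$, where the right side means $\{g\cdot\phi : \phi\in\text{Hype}_{\tau_0}(\mathcal{M})\}$.

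\textbf{Main steps.} First I would fix $g\in\Gamma$ and a state $\phi\in\text{Hype}_{\tau_0}(\mathcal{M})$, set $\psi = g\cdot\phi$, i.e. $\psi(x)=\phi(\lambda(g^{-1})x\lambda(g))$, and check the three defining properties of membership in $\text{Hype}_{\tau_0}(\lambda(g)\mathcal{M}\lambda(g)^*)$. (i) $\psi$ is a state on $\mathbb{B}(\ell^2(\Gamma))$: immediate since $x\mapsto \lambda(g^{-1})x\lambda(g)$ is a unital $*$-isomorphism of $\mathbb{B}(\ell^2(\Gamma))$ and $\phi$ is a state. (ii) $\psi|_{L(\Gamma)}=\tau_0$: for $x\in L(\Gamma)$ we have $\lambda(g^{-1})x\lambda(g)\in L(\Gamma)$, and $\psi(x)=\phi(\lambda(g^{-1})x\lambda(g))=\tau_0(\lambda(g^{-1})x\lambda(g))=\tau_0(x)$ by traciality of $\tau_0$ together with $\phi|_{L(\Gamma)}=\tau_0$. (iii) $\psi$ is an $\lambda(g)\mathcal{M}\lambda(g)^*$-hypertrace: take $m\in\mathcal{M}$ so that $\lambda(g)m\lambda(g)^*$ is a generic element of the conjugated algebra, and for any $T\in\mathbb{B}(\ell^2(\Gamma))$ compute
\begin{align*}
\psi\big((\lambda(g)m\lambda(g)^*)T\big)
&= \phi\big(\lambda(g^{-1})\lambda(g)m\lambda(g)^*T\lambda(g)\big)
= \phi\big(m\,(\lambda(g^{-1})^*T\lambda(g^{-1})^{**})... \big)
\end{align*}
— more cleanly: $\psi((\lambda(g)m\lambda(g)^*)T)=\phi(m\lambda(g)^*T\lambda(g))=\phi((\lambda(g)^*T\lambda(g))m)=\phi(\lambda(g)^*Tm\lambda(g))\cdot$(rewrite)$=\psi(T\lambda(g)m\lambda(g)^*)$, using that $\phi$ commutes with left/right multiplication by $m\in\mathcal{M}$ against the operator $\lambda(g)^*T\lambda(g)\in\mathbb{B}(\ell^2(\Gamma))$. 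This shows $g\cdot\text{Hype}_{\tau_0}(\mathcal{M})\subseteq\text{Hype}_{\tau_0}(\lambda(g)\mathcal{M}\lambda(g)^*)$, and applying the same argument to $g^{-1}$ and the conjugated algebra gives the reverse inclusion, hence equality.

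\textbf{What I expect to be routine versus delicate.} None of this is genuinely hard — it is a bookkeeping verification that the conjugation action on subalgebras intertwines with the conjugation action on states, and the only place to be careful is to keep track of which of $\lambda(g)$ and $\lambda(g^{-1})=\lambda(g)^*$ appears where, so that the hypertrace/trace conditions transport correctly. The one conceptual point worth stating explicitly is that $\text{Hype}_{\tau_0}(\mathcal{M})$ is genuinely nonempty for every amenable $\mathcal{M}\subseteq L(\Gamma)$, which is exactly \thref{keylink} applied with the standard representation $L^2(L(\Gamma),\tau_0)=\ell^2(\Gamma)$, so the map in the statement is well-defined. I would then simply remark that the equivariance just proved is precisely the assertion of the lemma.
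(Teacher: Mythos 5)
Your proposal is correct and follows essentially the same route as the paper: verify that $g\cdot\phi$ is a hypertrace for $\lambda(g)\mathcal{M}\lambda(g)^*$ restricting to $\tau_0$, via the same conjugation computation, and obtain the reverse inclusion by replacing $\mathcal{M}$ with $g^{-1}\cdot\mathcal{M}$. The only cosmetic issue is the garbled first line of your step (iii), which your "more cleanly" rewrite fixes; your explicit check that $\psi|_{L(\Gamma)}=\tau_0$ via traciality is if anything slightly more careful than the paper's.
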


\begin{proof}
To verify $\Gamma$-equivariance, it suffices to show that $$ g\cdot \hyp(\mathcal{M}) \subseteq  \hyp(g\cdot \mathcal{M} ), \ \forall g\in\Gamma,\ \forall \mathcal{M}\in\suba(\vN) \text{ amenable},$$
because the reverse inclusion follows by replacing $\mathcal{M}$ with  $g^{-1} \cdot \mathcal{M}$.
Let $g\in\Gamma$ and $\mathcal{M}\in \suba(\vN)$ and  $\phi\in \hyp(\mathcal{M})$. We shall show  that  $g\cdot \phi\in \hyp(g \cdot \mathcal{M} )$. Recall that $g\cdot \phi(x):=\phi(\lambda(g^{-1})x \lambda(g))$ and $g\cdot \mathcal{M}=\lambda(g) \mathcal{M}\lambda(g^{-1})$.
For any $m\in \mathcal{M}$ and $T\in\Bl$, we have
\begin{align*}(g\cdot \phi)( \lambda(g)m\lambda(g^{-1})T)&=
\phi(\lambda(g^{-1})\lambda(g)m\lambda(g^{-1})T\lambda(g))\\&=
\phi(m\lambda(g^{-1})T\lambda(g))
\\&=
\phi(\lambda(g^{-1})T\lambda(g)m)
\\&= 
\phi(\lambda(g^{-1})T\lambda(g)m\lambda(g^{-1})\lambda(g))
\\&= (g\cdot \phi)(T\lambda(g)m\lambda(g^{-1})).
\end{align*}
Note that the last equality follows since $\lambda(g^{-1})T\lambda(g)\in \Bl$ and $\phi$ is a hypertrace for $\mathcal{M}$. Moreover, $(g\cdot\phi)_{\vert_{g\cdot \mathcal{M} }}= \phi_{\vert_\mathcal{M}}=\tau_0.$
Therefore $g\cdot \phi $ is a hypertrace for $g\cdot \mathcal{M}$, which was to show.
\end{proof}
Let $X$ be a metrizable minimal $\Gamma$-space. 
\begin{prop}
\thlabel{measurable}
The map $$\hyp:\{\mathcal{M}\in \suba(L(\Gamma)) \, : \, \text{ amenable}\} \longrightarrow \left\{\text{Hype}_{\tau_0}(\mathcal{M})|_{C(X)}\right\}
$$
is upper semi-continuous.
\begin{proof}
Let $\{f_n\}_n$ be a countable dense subset of $C(X)$.
It is enough to prove that $\mathcal{M}\to f_n^{+}(\text{Hype}_{\tau_0}(\mathcal{M})|_{C(X)})$ is upper semi-continuous for each $n\in \mathbb{N}$. Now, fix $n_0\in\mathbb{N}$. We can find $\varphi_{\mathcal{M}}\in\text{Hype}_{\tau_0}(\mathcal{M})$ such that  
\[f_{n_0}^{+}(\text{Hype}_{\tau_0}(\mathcal{M})|_{C(X)})=\text{Re}(\varphi_{\mathcal{M}}|_{C(X)}(f_{n_0}))\] due to compactness of $\text{Hype}_{\tau_0}(\mathcal{M})$.
Let $\mathcal{M}_n$ be a sequence of amenable subalgebras of $L(\Gamma)$ such that $\mathcal{M}_n\to \mathcal{M}$ and $\varphi_{\mathcal{M}_n}$ as above for each $n\in\mathbb{N}$. We shall denote $\varphi_{\mathcal{M}_n}$ by $\varphi_n$ for ease of notation. Then, there exists a subsequence $(n_k)$ and $\varphi\in S(\mathbb{B}(\ell^2(\Gamma))$ such that 
\[\varphi_{n_k}\xrightarrow[]{\text{weak}^*}\varphi.\]
It follows from \thref{closedamenablecollection} that $\varphi\in\text{Hype}_{\tau_0}(\mathcal{M})$. Then, we see that
\[f_{n_0}^+(\text{Hype}_{\tau_0}(\mathcal{M}))\ge \text{Re}(\varphi|_{C(X)}(f_{n_0}))=\lim_k\text{Re}(\varphi_{n_k}|_{C(X)}(f_{n_0}))\]
Hence, it follows that
\[f_{n_0}^+(\text{Hype}_{\tau_0}(\mathcal{M}))\ge\limsup_nf_{n_0}^+(\text{Hype}_{\tau_0}(\mathcal{M}_n)),\]
thereby implying that the map $\mathcal{M}\to f_n^{+}(\text{Hype}_{\tau_0}(\mathcal{M})|_{C(X)})$ is upper semi-continuous. \end{proof}
\end{prop}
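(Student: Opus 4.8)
The plan is to use the embedding $f\colon\text{Hyp}_X\hookrightarrow\prod_n[0,1]$ of \eqref{emb}, which defines the topology on $\text{Hyp}_X$: since that topology is the initial topology for the coordinate maps, it is enough to prove that for each fixed $n_0\in\mathbb{N}$ the real-valued function
\[
\mathcal{M}\ \longmapsto\ f_{n_0}^{+}\bigl(\text{Hype}_{\tau_0}(\mathcal{M})|_{C(X)}\bigr)=\sup_{\varphi\in\text{Hype}_{\tau_0}(\mathcal{M})}\text{Re}\,\varphi(f_{n_0})
\]
is upper semi-continuous on the set of amenable subalgebras of $L(\Gamma)$ (which is closed, by \thref{closedamenablecollection}) equipped with the Effros--Maréchal topology. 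So I would fix $n_0$, take a convergent sequence $\mathcal{M}_n\to\mathcal{M}$ of amenable subalgebras, and aim to show
\[
\limsup_n f_{n_0}^{+}\bigl(\text{Hype}_{\tau_0}(\mathcal{M}_n)|_{C(X)}\bigr)\ \le\ f_{n_0}^{+}\bigl(\text{Hype}_{\tau_0}(\mathcal{M})|_{C(X)}\bigr).
\]

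For the main argument, note first that by \thref{keylink} each $\text{Hype}_{\tau_0}(\mathcal{M}_n)$ is non-empty, and it is weak$^*$-compact, so — since $\varphi\mapsto\text{Re}\,\varphi(f_{n_0})$ is weak$^*$-continuous — I can choose $\varphi_n\in\text{Hype}_{\tau_0}(\mathcal{M}_n)$ attaining the supremum, $\text{Re}\,\varphi_n(f_{n_0})=f_{n_0}^{+}(\text{Hype}_{\tau_0}(\mathcal{M}_n)|_{C(X)})$. Passing first to a subsequence along which these numbers converge to their $\limsup$, and then to a further subsequence $(n_k)$ along which $\varphi_{n_k}\xrightarrow{\text{weak}^*}\varphi$ for some state $\varphi$ on $\mathbb{B}(\ell^2(\Gamma))$ (using weak$^*$-compactness of the state space), the point is that \thref{closedamenablecollection} — more precisely, the argument in its proof — shows that such a weak$^*$-limit of $\mathcal{M}_{n_k}$-hypertraces restricting to $\tau_0$ on $L(\Gamma)$ is again an $\mathcal{M}$-hypertrace restricting to $\tau_0$, i.e.\ $\varphi\in\text{Hype}_{\tau_0}(\mathcal{M})$. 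Then
\[
f_{n_0}^{+}\bigl(\text{Hype}_{\tau_0}(\mathcal{M})|_{C(X)}\bigr)\ \ge\ \text{Re}\,\varphi(f_{n_0})\ =\ \lim_k \text{Re}\,\varphi_{n_k}(f_{n_0})\ =\ \limsup_n f_{n_0}^{+}\bigl(\text{Hype}_{\tau_0}(\mathcal{M}_n)|_{C(X)}\bigr),
\]
which is exactly the required inequality.

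I do not expect a genuinely new obstacle here: the only substantive step is the passage to the limit $\varphi\in\text{Hype}_{\tau_0}(\mathcal{M})$, and this is precisely the content of \thref{closedamenablecollection} — concretely, for $x\in\mathcal{M}$ one picks $x_n\in\mathcal{M}_n$ with $x_n\xrightarrow{\text{so}^*}x$, so $\|x_{n_k}-x\|_{\tau}\to 0$, and combines this with Cauchy--Schwarz estimates against the hypertrace identity, all controlled in $\|\cdot\|_{\tau}$ because every hypertrace in sight restricts to $\tau_0$ on $L(\Gamma)$. What remains — the reduction to coordinatewise upper semi-continuity through the product embedding, and the non-emptiness and compactness of $\text{Hype}_{\tau_0}(\mathcal{M}_n)$ ensuring the suprema are attained — is routine, so the bulk of the work was front-loaded into \thref{keylink} and \thref{closedamenablecollection}.
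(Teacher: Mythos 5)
Your proposal is correct and follows essentially the same route as the paper's own proof: reduce to coordinatewise upper semi-continuity of $\mathcal{M}\mapsto f_{n_0}^{+}(\text{Hype}_{\tau_0}(\mathcal{M})|_{C(X)})$, attain the supremum by compactness, extract a weak$^*$-convergent subsequence of the optimizing hypertraces, and invoke the argument of \thref{closedamenablecollection} to place the limit in $\text{Hype}_{\tau_0}(\mathcal{M})$. If anything, your explicit first passage to a subsequence realizing the $\limsup$ before extracting the weak$^*$-convergent one is slightly cleaner than the paper's phrasing of the same step.
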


\subsection{IRAs supported on amenable subalgebras and amenable Radical}
\begin{proof}[Proof of Theorem~\ref{thm:maintheorem}] Let $\mu$ be an IRA on $L(\Gamma)$ such that $\mu$-almost every $\M\in\suba(L(\Gamma))$ is amenable. We shall show that $\M\subseteq L(\text{Rad}(\Gamma))$ for $\mu$-a.e. $\M\subseteq L(\Gamma)$. Let $s\in \Gamma\setminus\text{Rad}(\Gamma)$. Using \cite[Proposition~7(ii)]{furman2003minimal}, we can find a metrizable $\Gamma$-boundary $X_s$ such that $s\not\in \text{Ker}(\Gamma\curvearrowright X_s)$.  Consider the map $$\hyp:\{\mathcal{M}\in \suba(L(\Gamma)) \, : \, \text{ amenable}\} \longrightarrow \left\{\text{Hype}_{\tau_0}(\mathcal{M})|_{C(X_s)}\right\}
.$$
By Lemma~\ref{hyp eq} and \thref{measurable},  this map is $\Gamma$-equivariant and measurable. We now push forward $\mu$ to obtain an invariant probability measure on a subset of $\CC(\prob(X_s))$, the space of all compact, convex subsets of $\prob(X_s)\cong S(C(X_s))$. Since $\prob(X_s)$ is irreducible, it follows from \cite[Lemma~2.3]{BDL} that $(\hyp)_*\mu=\delta_{\prob(X_s)}$. Hence,
$$\text{for }\mu\text{-a.e } \mathcal{M}\in\suba(L(\Gamma))\ : \ \hyp_{\tau_0}(\mathcal{M})|_{C(X_s)}=\prob(X_s).
$$
In particular, every $\nu\in \text{Prob}(X_s)$ is $(\M,\tau_0)$-invariant for $\mu$-a.e $\M$. Let $V_s\subset\text{Subalg}(L(\Gamma))$ be a co-null measurable set such that $\nu\in \text{Prob}(X_s)$ is $(\M,\tau_0)$-invariant for every $\M\in V_s$, i.e., $$\text{Prob}(X_s)=S_{\tau_0}^{\M}(C(X_s)),~\forall \M\in V_s.$$
Fix $\M\in V_s$. Let $s\in \Gamma\setminus\text{Rad}(\Gamma)$.  By assumption, we can find an element $x_s\in X_s$ such that $sx_s\ne x_s$. Using \thref{singularity}, we see that $$\tau_0(a\lambda(s)^*)=0, \forall a\in \M,~\forall \M\in V_s.$$ 
Let $V=\cap_{s\in\Gamma\setminus\text{Rad}(\Gamma)}{V_s}$ be the measurable co-null set obtained by taking the countable intersection of the co-null measurable sets $V_s$. Let $\M\in V$.  We see that $\tau_0(a\lambda(s)^*)=0$ for all $a\in \M$ and for all $s\in \Gamma\setminus\text{Rad}(\Gamma)$. Consequently, $\M\subset L(\text{Rad}(\Gamma))$ for all $\M\in V$. Therefore, $\overline{\langle\mu\rangle}\subset L(\text{Rad}(\Gamma))$. Since $\left(L(\text{Rad}(\Gamma)),\tau_0|_{\text{Rad}(\Gamma)}\right)$ is a tracial von Neumann algebra, $\overline{\langle\mu\rangle}$ is in the image of a conditional expectation $\mathbb{E}_{\overline{\langle\mu\rangle}}:L(\text{Rad}(\Gamma))\to\overline{\langle\mu\rangle}$ (see \cite[Theorem~9.1.2]{ADP}). Since amenability passes to subalgebras in the image of a conditional expectation, it follows that $\overline{\langle\mu\rangle}$ is amenable. 
\end{proof}
\bibliographystyle{amsalpha}
\bibliography{IRAsandamenability}
 \end{document}